\documentclass[11pt]{article}



%

\usepackage{fullpage}
\usepackage{authblk}
\usepackage{natbib}
\usepackage{algorithm}
\usepackage{algpseudocode}
\usepackage{amsmath,amsthm,amsfonts}
\usepackage{amsmath}
\usepackage[subnum]{cases}
\usepackage{tcolorbox}
\usepackage{mathrsfs}
\usepackage{amssymb}
\usepackage{color}
\usepackage{mathrsfs}
\usepackage{enumitem}
\usepackage{bm}
\usepackage{multirow}
\usepackage{booktabs}
\usepackage{makecell}
\usepackage{graphicx}
\usepackage{epstopdf}
\usepackage{subcaption}
\usepackage{comment}
\usepackage{cases}
\usepackage{appendix}
\usepackage{tikz}
\usetikzlibrary{arrows,shapes}
\usepackage[colorlinks= true, linkcolor=red, citecolor=blue, urlcolor=black]{hyperref}
\usepackage{cleveref}
\usepackage{marginnote}
\usepackage{diagbox} 
\usepackage{multirow}


\numberwithin{equation}{section}

\theoremstyle{definition}

\newtheorem{theorem}{Theorem}[section]

\newcommand{\mr}{\mathbb{R}}

\usepackage{booktabs}
\usepackage{tikz}
\usepackage{pgfplots}
\usetikzlibrary{calc,intersections}

\usetikzlibrary{shapes.geometric, arrows, positioning}

\tikzset{
	mybox/.style  = {draw, rectangle, minimum width=4cm, minimum height=0.8cm, text centered, text width=4.4cm,   
		font=\normalsize},
	box/.style  = {draw, rectangle, minimum width=2.0cm, minimum height=0.6cm, text centered, text width=3.0cm,   
		font=\normalsize},
	myarrow/.style = {line width=0.2pt, draw=black, -triangle 60, postaction={draw, line width=0.2pt, shorten >=10pt,-}}
}

\tikzstyle{arrow} = [->, >=stealth, -triangle 60]

\allowdisplaybreaks

\setcounter{Maxaffil}{0}

\makeatletter
\newcommand{\leqnomode}{\tagsleft@true}
\newcommand{\reqnomode}{\tagsleft@false}
\makeatother

\begin{document}

\title{On Underdamped Nesterov's Acceleration\thanks{This work was supported by Grant No.YSBR-034 of CAS and Grant No.12288201 of NSFC.}}


\author[1,2]{Shuo Chen \qquad Bin Shi\thanks{Corresponding author, Email: \url{shibin@lsec.cc.ac.cn}} \qquad Ya-xiang Yuan} 
\affil[1]{State Key Laboratory of Scientific and Engineering Computing, Academy of Mathematics and Systems Science, Chinese Academy of Sciences, Beijing 100190, China}
\affil[2]{University of Chinese Academy of Sciences, Beijing 100049, China}

\date\today

\maketitle

\begin{abstract}
The high-resolution differential equation framework has been proven to be tailor-made for Nesterov's accelerated gradient descent method~(\texttt{NAG}) and its proximal correspondence --- the class of faster iterative shrinkage thresholding algorithms (FISTA). However, the systems of theories is not still complete, since the underdamped case ($r < 2$) has not been included. In this paper, based on the high-resolution differential equation framework, we construct the new Lyapunov functions for the underdamped case, which is motivated by the power of the time $t^{\gamma}$ or the iteration $k^{\gamma}$ in the mixed term. When the momentum parameter $r$ is $2$, the new Lyapunov functions are identical to the previous ones.  These new proofs do not only include the convergence rate of the objective value previously obtained according to the low-resolution differential equation framework but also characterize the convergence rate of the minimal gradient norm square.  All the convergence rates obtained for the underdamped case are continuously dependent on the parameter $r$. In addition, it is observed that the high-resolution differential equation approximately simulates the convergence behavior of~\texttt{NAG}  for the critical case $r=-1$, while the low-resolution differential equation degenerates to the conservative Newton's equation.  The high-resolution differential equation framework also theoretically characterizes the convergence rates, which are consistent with that obtained for the underdamped case with $r=-1$.    
\end{abstract}

%

\section{Introduction}
\label{sec: intro}

Since the advent of the new century, we have witnessed the rapid development of statistical machine learning.  One of the core problems is the unconstrained optimization formulated as
\[
\min_{x\in\mathbb{R}^d}~f(x)
\]
with $f$ being a smooth convex function. Due to its cheap computation and storage, gradient-based optimization has become the workhorse powering recent developments. In the history of gradient-based optimization, a landmark development is Nesterov's accelerated gradient descent method~(\texttt{NAG})
\[
\left\{\begin{aligned}
& x_k = y_{k-1} - s \nabla f(y_{k-1}) \\
& y_{k} = x_{k} + \frac{k-1}{k+r}(x_{k} - x_{k-1}),
\end{aligned} \right.
\]
with any initial $y_0 = x_0 \in \mathbb{R}^d$, which is originally proposed in~\citep{nesterov1983method} with the mathematical tehnique --- \textit{Estimate Sequence} developed by himself to derive the accelerated convergence rate.

However, Nesterov's technique of~\textit{Estimate Sequence} is so algebraically complex that the cause of acceleration is still unclear. Until recently, ~\citet{shi2022understanding} does not propose the high-resolution differential equation framework to successfully lift the veil of the mysterious acceleration phenomenon by the discovery of the~\textit{gradient-correction} term. Meanwhile,~\citet{shi2022understanding} also finds the accelerated convergence phenomenon of the gradient norm minimization. The proof is highly simplified  in~\citep{chen2022gradient}, where the~\textit{implicit-velocity} effect is also found to be an equivalent representation of the~\textit{gradient-correction} term. This manifests that the high-resolution differential equation framework is tailor-made for the~\texttt{NAG}. 

Moreover, the unconstrained optimization that we often meet in practice is the composite optimization as
\[
\min_{x \in \mathbb{R}^d}\Phi(x) := f(x) + g(x),
\]
where $f$ is a smooth convex function and $g$ is a continuous convex function without the assumption to be smooth.  Consequently, with the objective function generalized to be composite, the~\texttt{NAG} becomes the class of so-called faster iterative shrinkage thresholding algorithms (FISTA) as
\[
\left\{\begin{aligned}
& x_k = y_{k-1} - s G_s(y_{k-1}) \\
& y_{k} = x_{k} + \frac{k-1}{k+r}(x_{k} - x_{k-1}),
\end{aligned} \right.
\]
with any initial $y_0 = x_0 \in \mathbb{R}^d$.\footnote{For any $x\in \mathbb{R}^d$, the proximal subgradient $G_s(x)$ is defined in~\Cref{subsec: notation}.} With the key observation to improve the fundamental proximal gradient inequality, the high-resolution differential equation framework is generalized to the composite optimization in~\citep{li2022proximal}.

\subsection{Motivation}
\label{subsec: motivation}

To show our motivation, we first write down the high-resolution differential equation\footnote{Throughout the paper, the high-resolution differential equation refers specifically to the one derived from the~\textit{implicit-velocity} scheme in~\citep{chen2022gradient}. It should be noted that the original high-resolution differential equation is derived from the~\textit{gradient-correction} scheme in~\citep{shi2022understanding}.} as
\begin{equation}
\label{eqn: high-ode}
    \ddot{X}+\frac{r+1}{t}\dot{X} + \left[1+\frac{(r+1)\sqrt{s}}{2t}\right]\nabla f\left(X+\sqrt{s}\dot{X}\right) = 0,
\end{equation}
with any initial $X(0)=x_0$ and $\dot{X}(0) = 0$. From the high-resolution differential equation~\eqref{eqn: high-ode}, we know that the momentum parameter $r$  determines the coefficient of the first-order derivative, or says the velocity, which corresponds to the friction or the damping term in physics. Hence, when the momentum parameter satisfies $r > - 1$, the high-resolution differential equation~\eqref{eqn: high-ode} corresponds to a second-order dissipative potential system. 

\subsubsection{The damped case: $r\in (-1, +\infty)$} 
\label{subsubsec: damped}
For the critically damped case ($r = 2$), it corresponds to the original scheme proposed by~\citep{nesterov1983method} with his technique of ``\textit{Estimate Sequence}'' to obtain the convergence rate of the objective value as
\[
f(y_k) - f(x^\star) \leq O\left(\frac{1}{sk^{2}}\right),
\]
for any step size $0 < s \leq 1/L$. Based on the low-resolution differential equation framework first proposed in~\citep{su2016differential},~\citet{attouch2016rate} discover the faster convergence rate of the objective value for the overdamped case ($r > 2$) as                      
\[
f(y_k) - f(x^\star) \leq o\left(\frac{1}{sk^{2}}\right),
\]
for any step size $0 < s \leq 1/L$. \citet{shi2022understanding} proposes the high-resolution differential equation framework to reproduce the two convergence rates above, and simultaneously finds the accelerated convergence phenomenon for the gradient norm minimization as
\[
\min\limits_{0\leq i\leq k}\|f(y_{i})\|^{2} \leq O\left(\frac{1}{s^2k^{3}}\right),
\] 
for any step size $0 < s \leq 1/L$, which is improved in~\citep{chen2022gradient} as
\[
\min\limits_{0\leq i\leq k}\|f(y_{i})\|^{2} \leq o\left(\frac{1}{s^2k^{3}}\right).
\]
The momentum parameter $r \in (-1,2)$ corresponds to the underdamped case, where the convergence rate of the objective value is derived in~\citep{attouch2019rate} based on the low-resolution differential equation framework as
\[
f(y_k) - f(x^\star) \leq O\left(\left(\frac{1}{sk^2}\right)^{\frac{r+1}{3}}\right).
\]
As a summary to make a comparison, we demonstrate the convergence rates of~\texttt{NAG} with the related mathematical techniques in~\Cref{tab: discrete-case}. 

\begin{table}[htpb!]
\centering
\resizebox{\columnwidth}{!}{
\begin{tabular}{c|cl|cl}
   \toprule
                                                        & $f(y_{k}) - f(x^\star)$                                          &  Math-Techniques                          &$\min\limits_{0\leq i\leq k}\|\nabla f(y_{i})\|^{2}$               & Math-Techniques                               \\
   \midrule
                                                        & \multirow{4}*{$O\left(\frac{1}{sk^{2}}\right)$}     & Estimate Sequence                        &\multirow{4}*{$o\left(\frac{1}{s^2k^{3}}\right)$}                 &                                                           \\
   Critically Damped Case               &                                                                           &\citep{nesterov1983method}            &                                                                                         &  \textbf{High-Resolution ODE}           \\
                  $r=2$                            &                                                                           &\textbf{High-Resolution ODE}           &                                                                                        &  \citep{chen2022gradient}                  \\  
                                                       &                                                                            &\citep{shi2022understanding}          &                                                                                         &                                                           \\
\midrule   
                                                       & \multirow{4}*{$o\left(\frac{1}{sk^{2}}\right)$}       &Low-Resolution ODE                      &\multirow{4}*{$o\left(\frac{1}{s^2k^{3}}\right)$}                  &                                                           \\
           Overdamped Case              &                                                                            &\citep{attouch2016rate}                   &                                                                                          & \textbf{High-Resolution ODE}            \\
           $r \in (2,+\infty)$                  &                                                                           &\textbf{High-Resolution ODE}          &                                                                                          & \citep{chen2022gradient}                   \\  
                                                       &                                                                           &\citep{shi2022understanding}          &                                                                                          &                                                         \\
 \midrule 
                                                       & \multirow{4}*{$O\left(\left(\frac{1}{sk^2}\right)^{\frac{r+1}{3}}\right)$}        &Low-Resolution ODE              & \multirow{4}*{\pmb{?}}                    &                             \\
           Underdamped Case            &                                                                                   &\citep{attouch2019rate}         &                                           &\textbf{High-Resolution ODE} \\
                 $r \in (-1,2)$                  &                                                          &\textbf{High-Resolution ODE}    &                                           &~~$\pmb{?}$                  \\
                                                      &                                                                                   &~~$\pmb{?}$                     &                                           &                             \\
   \bottomrule
    \end{tabular}}
    \caption{The convergence rates of~\texttt{NAG} and the related mathematical techniques used in proof.}
    \label{tab: discrete-case}
\end{table}

From~\Cref{tab: discrete-case}, all the convergence rates for both the critically damped case and the overdamped case can be obtained under the high-resolution differential equation framework instead of the previous mathematical techniques. Naturally, for the underdamped case ($r \in (-1,2)$), we can raise the following two questions under the high-resolution differential equation framework:

\begin{tcolorbox}
\begin{itemize}
\item Can the same convergence rate of the objective value be obtained?
\item How faster does the minimal gradient norm square converge?
\end{itemize}
\end{tcolorbox}

Under the high-resolution differential equation framework, the crucial crux connecting smooth optimization and composite optimization is the improved fundamental proximal gradient inequality, which is observed in~\citep{li2022proximal}. Specifically, for both the critically damped case and the overdamped case, that is, $r \geq 2$, the same convergence rate of the objective values is obtained with $f(x_k) - f(x^\star)$ instead of $f(y_k) - f(x^\star)$, which together with the accelerated rate of the gradient norm minimization is generalized to the proximal case. Similarly, the following question for the underdamped case ($r \in (-1,2)$) is raised as:

\begin{tcolorbox}
\begin{itemize}
\item Can the convergence rates of both the objective value and the gradient norm minimization be generalized to the proximal case?
\end{itemize}
\end{tcolorbox}

\subsubsection{The critical case: $r=-1$}
\label{subsubec: critical}

To understand the convergence behavior for the underdamped case, we take the numerical experiments of~\texttt{NAG} with different momentum parameters $r$ in~\Cref{fig: convergence}. Both the convergence rates of the objective value in~\Cref{subfig: objective-value-nag} and the minimal gradient norm square in~\Cref{subfig: grad-norm-nag} slow down with the decrease of the momentum parameter $r$.
\begin{figure}[htpb!]
\centering
\begin{subfigure}[t]{0.45\linewidth}
\centering
\includegraphics[scale=0.18]{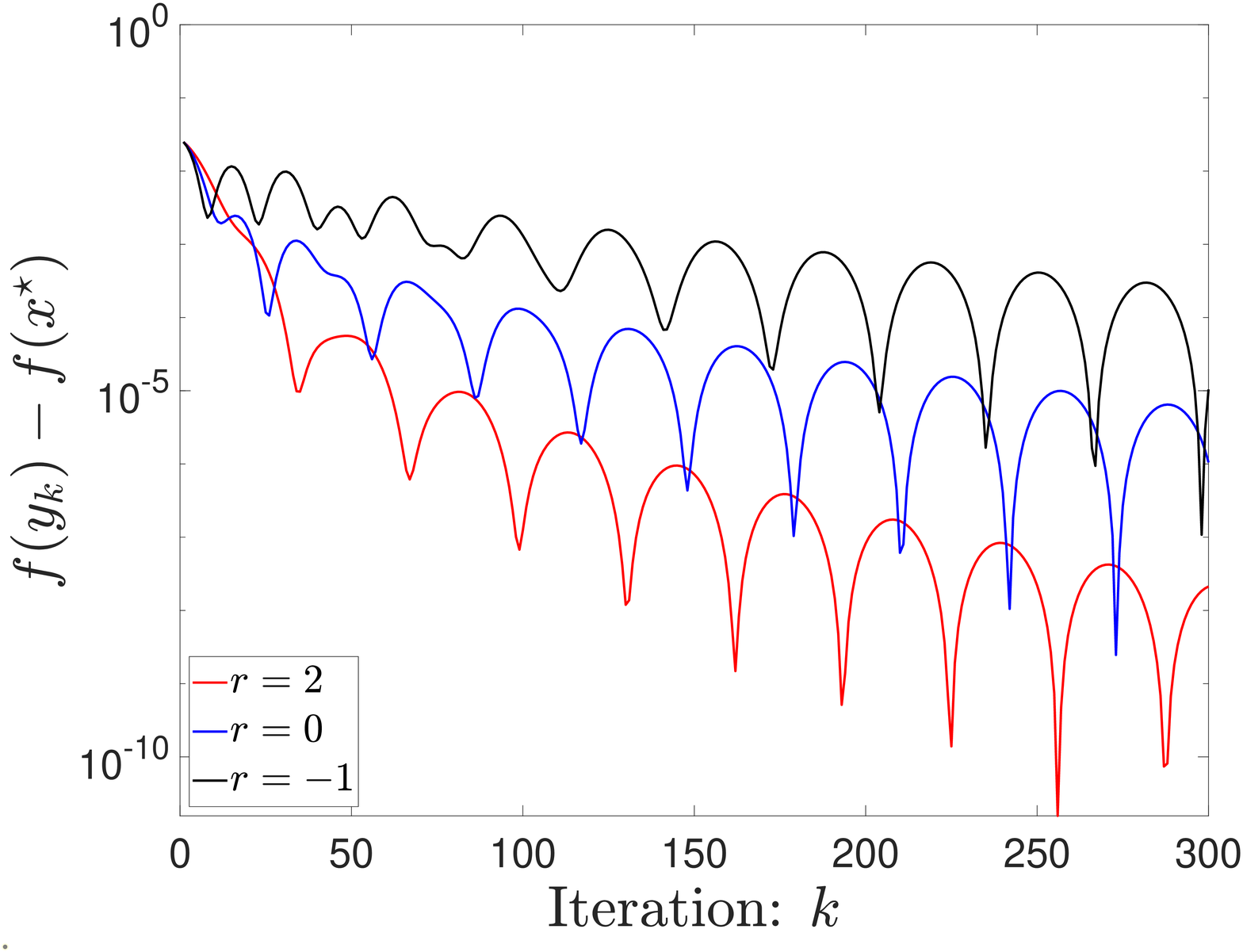}
\caption{Objective Value}
\label{subfig: objective-value-nag}
\end{subfigure}
\begin{subfigure}[t]{0.45\linewidth}
\centering
\includegraphics[scale=0.18]{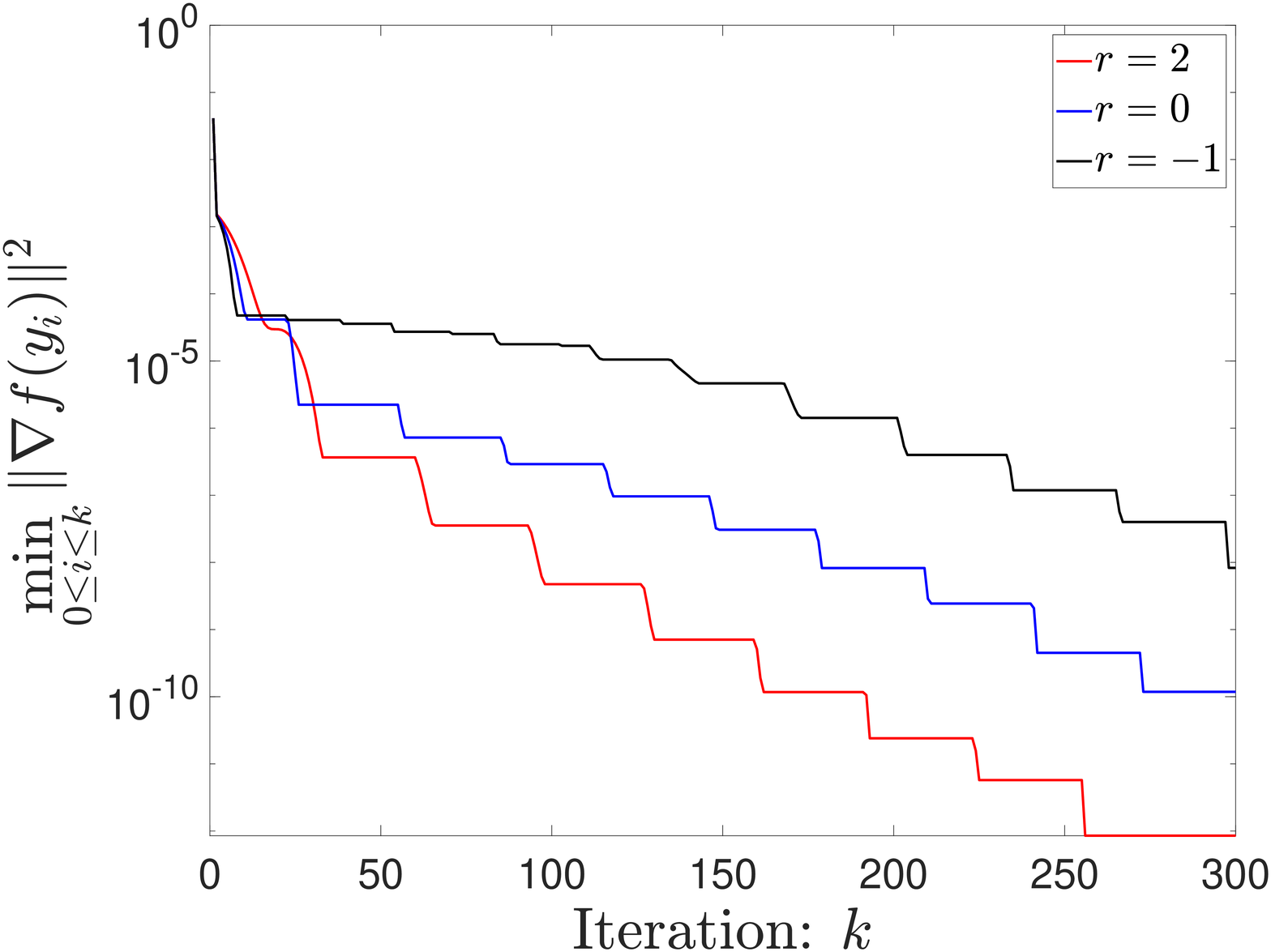}
\caption{Minimal Gradient Norm Square}
\label{subfig: grad-norm-nag}
\end{subfigure}
\caption{Implementing~\texttt{NAG} with the step size $s=0.1$ in the objective function $f(x_1,x_2) = 2\times10^{-2}x_{1}^{2} + 5\times10^{-3}x_{2}^{2}$ from the initial point $\left(1, 1\right)$.}
\label{fig: convergence}
\end{figure}
More interestingly, when the momentum parameter is reduced to the critical case ($r=-1$), the objective value and the minimal gradient norm square still decrease.

Recall the low-resolution differential equation derived in~\citep{su2016differential} as
\begin{equation}
\label{eqn: low-ode}
    \ddot{X}+\frac{r+1}{t}\dot{X} + \nabla f\left(X\right) = 0,
\end{equation}
with any initial $X(0)=x_0$ and $\dot{X}(0) = 0$. When the momentum parameter degenerates to $r=-1$, the low-resolution differential equation~\eqref{eqn: low-ode} is reduced to the standard Newton's equation
\begin{equation}
\label{eqn: low-ode-critical}
\ddot{X} + \nabla f\left(X\right) = 0,
\end{equation}
which is conservative without any friction or damping term to make it converge. In other words, the conservative  Newton's equation~\eqref{eqn: low-ode-critical} is inconsistent with the converging numerical phenomenon of~\texttt{NAG} with $r=-1$, which is shown in~\Cref{fig: ode}. Hence, since the convergence rate derived in~\citep{attouch2019rate} is based on the low-resolution differential equation framework, the analysis cannot be generalized to the critical case ($r=-1$).

\begin{figure}[htpb!]
\centering
\begin{subfigure}[t]{0.45\linewidth}
\centering
\includegraphics[scale=0.18]{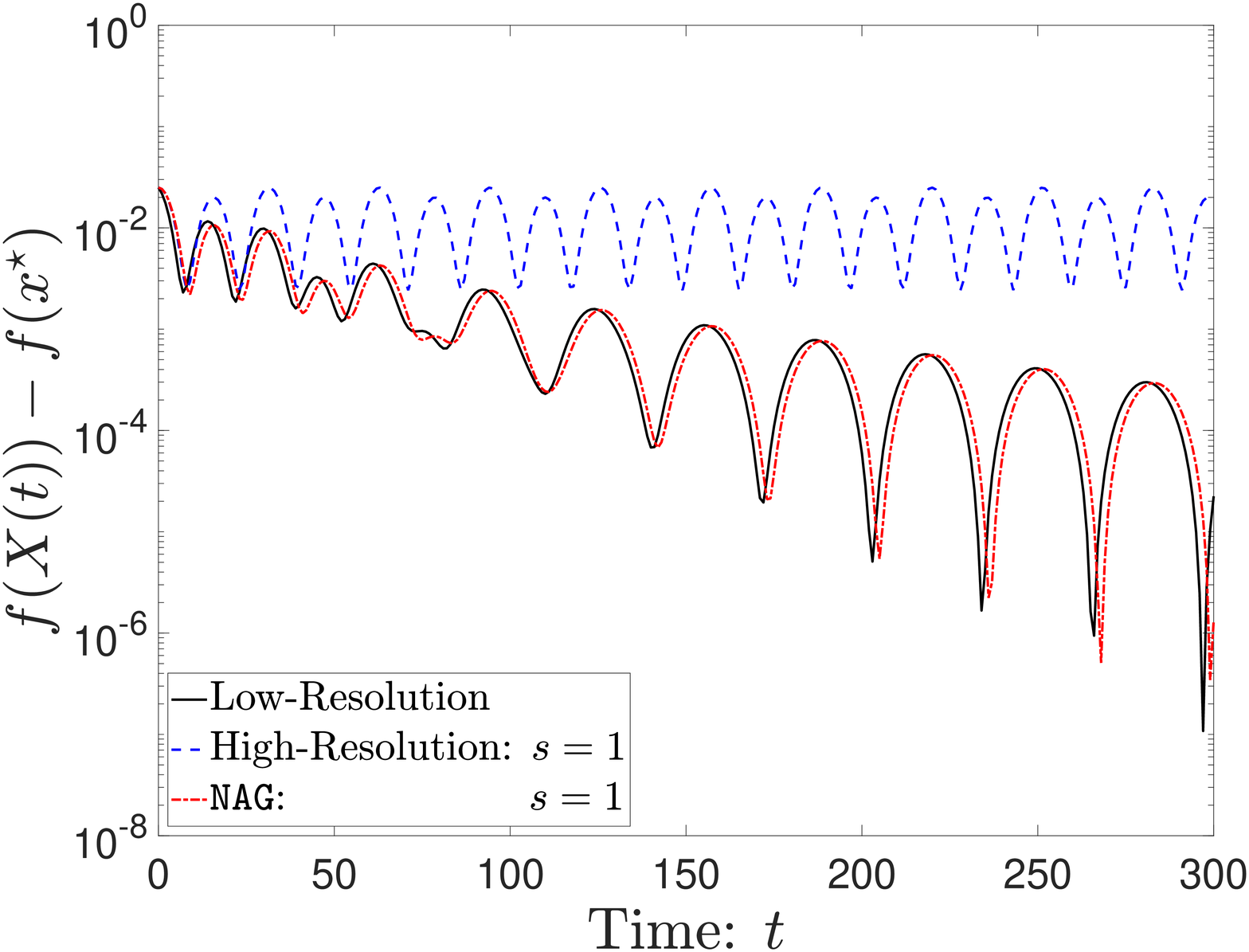}
\caption{Objective Value}
\label{subfig: objective-value-ode}
\end{subfigure}
\begin{subfigure}[t]{0.45\linewidth}
\centering
\includegraphics[scale=0.18]{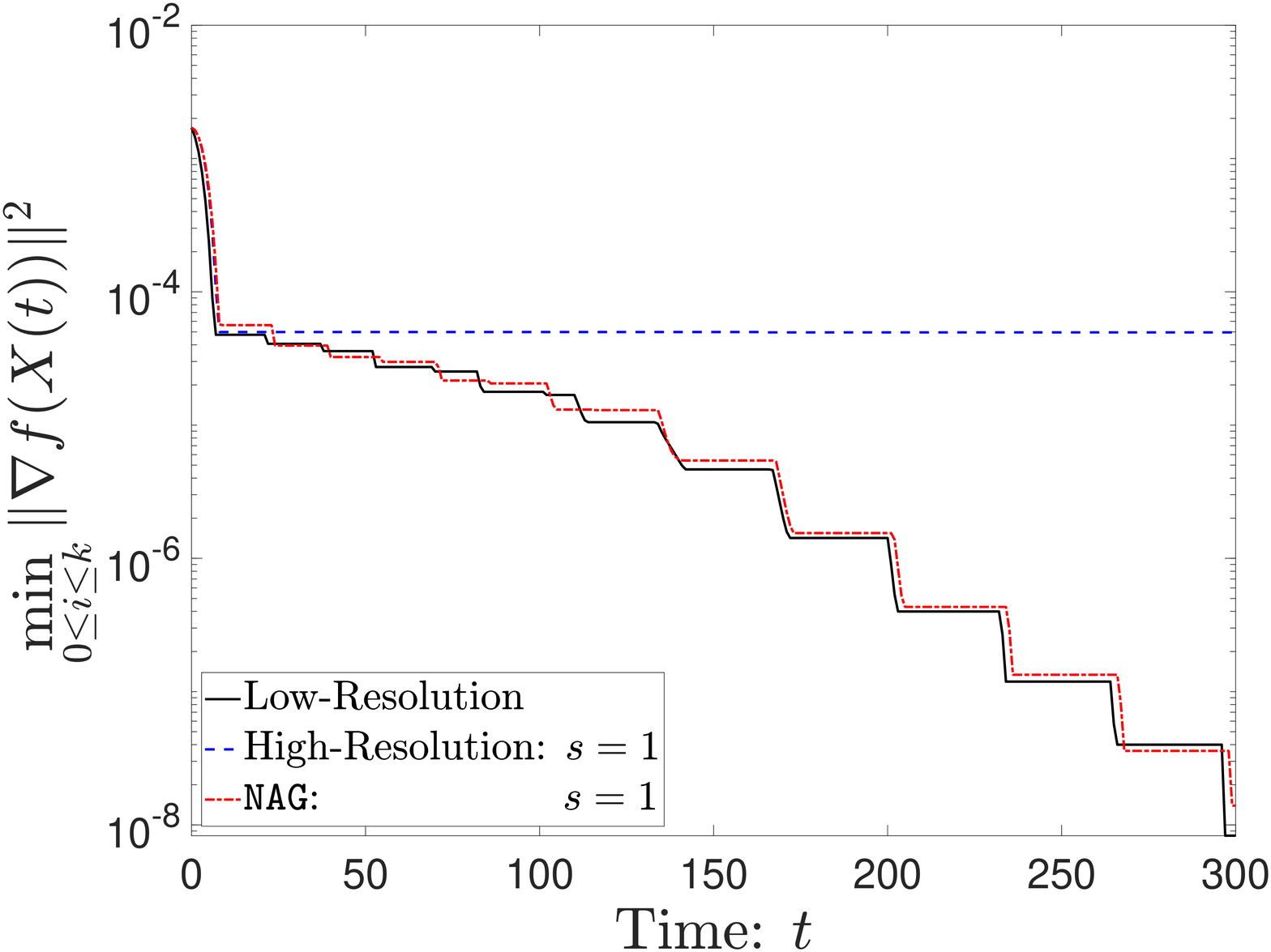}
\caption{Minimal Gradient Norm Square}
\label{subfig: grad-norm-ode}
\end{subfigure}
\caption{The same setting to implement~\texttt{NAG} with the momentum parameter $r=1$ in~\Cref{fig: convergence}. For both the low-resolution and high-resolution differential equations, we use the identification $t=k\sqrt{s}$ between time and iterations for the $x$-axis.}
\label{fig: ode}
\end{figure}

However, we find that the high-resolution differential equation is also very close to~\texttt{NAG} for the critical case ($r=-1$) in~\Cref{fig: ode}. The high-resolution differential equation for the critical case is written as
\begin{equation}
\label{eqn: hig-ode-critical}
\ddot{X} + \nabla f(X + \sqrt{s}\dot{X}) = 0,
\end{equation}
where the gradient term implicitly includes the first-order derivative or the velocity. Taking a simple expansion for the gradient term as 
\[
\nabla f(X + \sqrt{s}\dot{X}) \approx  \sqrt{s}\nabla^2f(X)\dot{X} + \nabla f\left(X\right),
\]
we can find that the high-resolution differential equation~\eqref{eqn: hig-ode-critical} characterizes the damping effect generated by the~\textit{gradient-correction} term or the~\textit{implicit-velocity} term, which exists in the original~\texttt{NAG} with the momentum parameter $r=-1$.

\subsection{Overview of contribution}
\label{sec: contributions}
It has been found in the line works~\citep{shi2022understanding, chen2022gradient, li2022proximal} that the high-resolution differential equation framework with the related mathematical techniques of Lyapunov function and phase-space representation and the~\texttt{NAG} are a perfect match for both the critically damped case and the overdamped case. Motivated by the introduction of the fractional power $k^{(r+1)/3}$ in \citep{attouch2019rate}, we derive the convergence rates by use of the newly constructed Lyapunov function to complement the overdamped case ($r \in [-1,2)$) to the high-resolution differential equation framework in~\Cref{tab: discrete-case}. Meanwhile, we also generalize the convergence rates to the composite function based on the improved fundamental proximal gradient inequality. 

As follows, we briefly describe our main contributions to the convergence rates of~\texttt{NAG} for the underdamped case $r\in[-1,2)$ under the high-resolution differential equation framework, which is based on the gradient Lipschitz inequality and the improved fundamental proximal inequality, respectively.

%

\paragraph{Based on the gradient Lipschitz inequality} For any $x, y \in \mathbb{R}^d$, the $L$-smooth function satisfies the gradient Lipschitz inequality as 
\begin{equation}
\label{eqn: grad-lip}
f(y) - f(x) \leq \left\langle \nabla f(y), y - x \right\rangle - \frac{1}{2L} \|  \nabla f(y) - \nabla f(x) \|^2.
\end{equation}
Taking the continuous high-resolution differential equation as the perspective, we construct the new Lyapunov function via the phase-space representation to derive the convergence rate for the minimal gradient norm square as
\[
\min_{0\leq i\leq k}\|\nabla f(y_{i})\|^{2}\leq o\left(  \frac{1}{s^{\frac{r+4}{3}} k^{\frac{2r+5}{3}} } \right).
\]
for any step size $0 < s \leq 1/L$. Meanwhile, the high-resolution differential equation frameworks also include the previous convergence rate of the objective values 
\[
f(y_{k}) - f(x^\ast) \leq O\left(\left(\frac{1}{sk^2}\right)^{\frac{r+1}{3}}\right).
\]

%

\paragraph{Based on the improved fundamental proximal inequality}
It should be noted that the convergence rates above derived from the gradient Lipschitz inequality are hard to generalize to the composite optimization. In~\citep{li2022proximal}, we find that the bridge connecting the smooth optimization to the composite optimization is the improved fundamental proximal inequality 
as 
\begin{equation}
\label{eqn: prox-grad}
\Phi(y-s G_s(y)) - \Phi(x) \leq \left\langle G_s(y), y - x \right\rangle - \left(s - \frac{Ls^2}{2}\right) \|  G_s(y) \|^2
\end{equation}
for any $0 < s \leq 1/L$. With a similar Lyapunov function, we obtain the convergence rate of the minimal gradient norm square as
\[
\min_{0\leq i\leq k}\|G_s(y_{i})\|^{2}\leq o\left(  \frac{1}{s^{\frac{r+4}{3}} k^{\frac{2r+5}{3}} } \right).
\]
for any step size $0 < s < 1/L$. Meanwhile, the high-resolution differential equation frameworks also include the previous convergence rate for the objective values as
\[
\Phi(x_{k}) - \Phi(x^\ast) \leq O\left(\left(\frac{1}{sk^2}\right)^{\frac{r+1}{3}}\right).
\]


\paragraph{The continuous convergence rates}
As the continuous perspective, we obtain the convergence rates of both the objective value and the minimal gradient norm square from the high-resolution differential equation for the underdamped case ($r \in [-1,2)$). Combined with the results obtained for both the overdamped case and the critically damped case in~\citep{shi2022understanding, chen2022gradient}, we show the convergence rates for all the momentum parameter $r \in [-1, +\infty)$ in~\Cref{tab: continuous-case}, where the bold labels note the convergence rates obtained for the underdamped case in the paper. 

\begin{table}[htpb!]
    \centering
    \begin{tabular}{l|c|c}
   \toprule
                                   & $f(X+\sqrt{s}\Dot{X}) - f(x^\star)$               & $\inf\limits_{t_{0}\leq u\leq t}~\|f(X(u)+\sqrt{s}\dot{X}(u))\|^{2}$ \\
   \midrule
   Overdamped Case                 & \multirow{2}*{$o\left(\frac{1}{t^{2}}\right)$}    & \multirow{2}*{$o\left(\frac{1}{t^{3}}\right)$}                       \\
    ~~~~$r \in (2, +\infty)$           &                                                   &                                                                      \\
   \midrule 
   Critically Damped Case          & \multirow{2}*{$O\left(\frac{1}{t^{2}}\right)$}    & \multirow{2}*{$o\left(\frac{1}{t^{3}}\right)$}                      \\
    ~~~~$r=2$                          &                                                   &                                                                     \\   
   \midrule 
  \textbf{Underdamped Case}        & \multirow{2}*{$\pmb{O\left(\frac{1}{t^{\frac{2(r+1)}{3}}}\right)}$}  & \multirow{2}*{$\pmb{o\left(\frac{1}{t^{\frac{2r+5}{3}}}\right)}$          } \\ 
  ~~~~$\pmb{r \in [-1,2)}$ & &  \\  
   \bottomrule
    \end{tabular}
    \caption{Convergence rates derived from the high-resolution differential equation.}
    \label{tab: continuous-case}
\end{table}

\subsection{Notations}
\label{subsec: notation}

In this paper, we follow the notions of~\citep{nesterov2003introductory} with slight modifications. Let $\mathcal{F}^0$ be the class of continuous convex functions defined on $\mathbb{R}^d$; that is, $g \in \mathcal{F}^0$ if $g(y) \geq g(x) + \langle \nabla g(x), y - x\rangle$ for any $x, y \in \mathbb{R}^d$. The function class $\mathcal{F}^1_{L}$ is the subclass of $\mathcal{F}^0$ with the $L$-smooth gradient; that is, if $f \in \mathcal{F}^1_{L}$, its gradient is $L$-smooth in the sense that 
\[
\| \nabla f(x) - \nabla f(y) \| \leq L \|x - y\|,
\]
for any $x, y \in \mathbb{R}^d$, where $\|\cdot\|$ denotes the standard Euclidean norm and $L > 0$ is the Lipschitz constant. (Note that this implies that $\nabla f$ is also $L'$-Lipschitz for any $L' \geq L$.)  Then, we describe briefly the definition of the proximal operator introduced in~\citep{beck2009fast}. For any $f\in \mathcal{F}_L^1$ and $g \in \mathcal{F}^0$, the proximal operator is defined as
\begin{equation}
\label{eqn: proximal-operator}
    P_s(x) := \mathop{\arg\min}_{z\in\mr^d}\left\{ \frac{1}{2s}\left\| z - \left(x - s\nabla f(x)\right) \right\|^2 + g(z) \right\},
\end{equation}
for any $x \in \mathbb{R}^d$.  With the proximal operator in~\eqref{eqn: proximal-operator}, the proximal subgradient at $x\in \mathbb{R}^d$ of the composite function $\Phi=f+g$ is given by
\begin{equation}
\label{eqn: prox-subgrad}
G_s(x):=\frac{x - P_s(x)}{s}.
\end{equation}



\subsection{Related works and organization}
\label{subsec: related-works}

Since state-of-the-art statistical machine learning springs up in the new century, the research on gradient-based optimization becomes dominant, theoretically and methodologically. It has raised wide concerns for people on the traditional theory of convex optimization, which includes some classical algorithms, such as the vanilla gradient descent, Polyak's heavy ball method~\citep{polyak1964some},~\texttt{NAG}~\citep{nesterov1983method, nesterov2003introductory} and so on. With the rapid development recently, the traditional theory has been extended to composite optimization and image science~\citep{beck2009fast, chambolle2016introduction, beck2017first}. More significantly, state-of-the-art statistical machine learning also gives rise to some
emerging areas, e.g., stochastic optimization~\citep{duchi2011adaptive, kingma2014adam, ghadimi2016accelerated} and nonconvex optimization~\citep{nesterov2018primal, carmon2020first}

At the modern interface between optimization and machine learning, one of the most important features is the continuous model, which introduces the views of differential equations such that the discrete algorithms can be understood and analyzed by use of methods from classical mathematics and physics. A series of research works based on the low-resolution differential equation to model and investigate the~\texttt{NAG} are proposed in~\citep{su2016differential, wibisono2016variational, wilson2021lyapunov, attouch2016rate, attouch2019rate}. Then, some further results are proposed in~\citep{muehlebach2019dynamical, muehlebach2021optimization}. A comprehensive review of these developments is described in~\citep{jordan2018dynamical}. However, the mechanism leading to the acceleration phenomenon generated by the~\texttt{NAG} is not found until the high-resolution differential equation framework recently proposed in~\citep{shi2022understanding}. Currently, stochastic gradient dynamics is introduced to investigate the stochastic optimization algorithms on the nonconvex objective function in~\citep{shi2020learning, shi2021hyperparameters}.

%

The remainder of the paper is organized as follows. In~\Cref{sec: continuous-case}, we construct the Lyapunov function to derive the continuous convergence rates as the perspective based on the high-resolution differential equation. The convergence rates of~\texttt{NAG} is obtained by closing the gap between the continuous model and the discrete algorithms via the phase-space representation in~\Cref{sec: discrete-case}. We generalize the convergence rates to the composite optimization by use of the improved fundamental proximal gradient inequality in~\Cref{sec: proximal}.~\Cref{sec: critical-case} complements the results for the critical case. We conclude the paper with some discussions on new problems and possible future works in~\Cref{sec: conclusion}.

%
%
%


%

\section{High-Resolution differential equation}
\label{sec: continuous-case}

In this section, along with the principled way of constructing Lyapunov functions in~\citep{chen2022revisiting}, we discuss the convergence behavior of the solution to the continuous high-resolution differential equation~\eqref{eqn: high-ode}. For convenience in the following sections of this paper, we use the parameter $\gamma = \frac{r+1}{3} \in (0,1)$ instead of the previous momentum parameter $r$. Hence, the high-resolution differential equation is rewritten down as
\begin{equation}
\label{eqn: high-ode-gamma}
    \ddot{X}+\frac{3\gamma}{t}\dot{X} + \left[1+\frac{3\gamma\sqrt{s}}{2t}\right]\nabla f\left(X+\sqrt{s}\dot{X}\right) = 0,
\end{equation}
with $X(0) = x_0 \in \mathbb{R}^d$ and $\dot{X}(0) = 0$. Then, we proceed to the step of constructing the Lyapunov function.  

\begin{itemize}
\item[\textbf{(I)}] First, we consider the mixed term $\frac{1}{2}\big\|t^\gamma\dot{X}+2\gamma t^{\gamma-1}(X-x^\star)\big\|^2$, where the power $t^{\gamma}$ is inspired by the construction of the Lyapunov function in~\citep{attouch2019rate}. Along the solution to the high-resolution differential equation~\eqref{eqn: high-ode-gamma}, we have
\begin{multline}
\frac{d}{dt} \big[ t^\gamma\dot{X} + 2\gamma t^{\gamma-1}(X-x^\star) \big] \\
                              =  2\gamma(\gamma-1)t^{\gamma-2}(X - x^\star) - t^{\gamma}\left(\frac{1+3\gamma\sqrt{s}}{2t}\right)\nabla f(X+\sqrt{s}\dot{X}) 
\label{eqn: mix-1-con}
\end{multline}
With the equality above~\eqref{eqn: mix-1-con}, we obtain the time derivative of the mixed term as
\begin{align}
\frac{d}{dt} &\bigg( \frac{1}{2}\big\|t^\gamma\dot{X}+2\gamma t^{\gamma-1}(X-x^\star)\big\|^2 \bigg) \nonumber \\
& =  \left\langle t^\gamma\dot{X}+2\gamma t^{\gamma-1}(X-x^\star),  2\gamma(\gamma-1)t^{\gamma-2}(X - x^\star) - t^{\gamma}\left(1+\frac{3\gamma\sqrt{s}}{2t}\right)\nabla f(X+\sqrt{s}\dot{X})  \right\rangle \nonumber  \\
& =  - \underbrace{2\gamma(1-\gamma)t^{2\gamma-2}\langle\dot{X}, X-x^\star\rangle}_{\textbf{I}_{1}} - \underbrace{ 4\gamma^{2}(1-\gamma)t^{2\gamma-3}\|X-x^\star\|^{2}}_{\textbf{I}_{2}} \nonumber  \\
&\mathrel{\phantom{=}} -\underbrace{ t^{\gamma}(t^{\gamma} - 2\gamma\sqrt{s}t^{\gamma-1})\left(1+\frac{3\gamma\sqrt{s}}{2t}\right)\big\langle\dot{X}, \nabla f(X+\sqrt{s}\dot{X})\big\rangle}_{\textbf{I}_{3}} \nonumber  \\
&\mathrel{\phantom{=}}- \underbrace{2\gamma t^{2\gamma-1}\left(1+\frac{3\gamma\sqrt{s}}{2t}\right)\big\langle X+\sqrt{s}\dot{X}-x^\star, \nabla f(X+\sqrt{s}\dot{X})\big\rangle}_{\textbf{I}_{4}}.  \label{eqn: mix-2-con}
\end{align}
From the equality~\eqref{eqn: mix-2-con}, we find that both the terms, $\textbf{I}_2$ and  $\textbf{I}_4$, are no less than zero, so it is enough for us to adjust the coefficients of the distance square and the potential function to eradicate the term $\textbf{I}_1$ and the term $\textbf{I}_3$, respectively.

\item[\textbf{(II)}] Then, we consider the distance square $\| X - x^\star\|^2$. To cancel out the term $\textbf{I}_1$, we need to put the coefficient  $\gamma(1-\gamma)t^{2\gamma-2}$ before the distance square. Hence, we have
\begin{multline}
\label{eqn: dist-1-con}
\frac{d}{dt} \left( \gamma(1-\gamma)t^{2\gamma-2}\| X - x^\star\|^2 \right)  \\=  \underbrace{ 2\gamma(1-\gamma)t^{2\gamma-2}\langle\dot{X}, X-x^\star\rangle}_{\textbf{II}_{1}} - \underbrace{2\gamma(1-\gamma)^2t^{2\gamma-3}\|X-x^\star\|^2}_{\textbf{II}_{2}}.
\end{multline}
Compared with the two inequalities above~\eqref{eqn: mix-2-con} and~\eqref{eqn: dist-1-con}, we obtain that the two terms, $\textbf{I}_1$ and $\textbf{II}_1$, cancel each other out, that is, $\textbf{II}_1 - \textbf{I}_1 = 0$.

\item[(\textbf{III})] Finally, we consider the potential function $f(X + \sqrt{s}\dot{X}) - f(x^\star)$. Let $g(t)$ be the coefficient of the potential function.   Then, along the solution to the high-resolution differential equation~\eqref{eqn: high-ode-gamma}, we obtain the derivative of the potential function as
\begin{align}
\frac{d}{dt}\big[g(t)&\big( f(X + \sqrt{s}\dot{X}) - f(x^\star) \big)\big]   \nonumber \\
                                 & =  \underbrace{g(t)\left(1 - \frac{3\gamma\sqrt{s}}{t}\right) \big\langle \dot{X}, \nabla f(X+\sqrt{s}\dot{X}) \big\rangle}_{\textbf{III}_1} \nonumber \\ & \mathrel{\phantom{=}} - \underbrace{\sqrt{s}g(t)\left(1+\frac{3\gamma\sqrt{s}}{2t}\right) \big\|\nabla f(X+\sqrt{s}\dot{X})\big\|^2}_{\textbf{III}_2} + \underbrace{g'(t)\left( f(X + \sqrt{s}\dot{X}) - f(x^\star) \right) }_{\textbf{III}_3}   
\label{eqn: pot-1-con}
\end{align}
To make the term $\textbf{III}_1$ eradicating the term $\textbf{I}_3$, that is, $\textbf{III}_1 - \textbf{I}_3 = 0$, we set the coefficient as
\begin{equation}
\label{eqn: coeff-pot}
g(t) = \frac{t+3\gamma\sqrt{s}/ 2}{t - 3\gamma\sqrt{s}} \cdot  t^{\gamma}(t^{\gamma} - 2\gamma\sqrt{s}t^{\gamma-1}).
\end{equation}
With the expression of the coefficient~\eqref{eqn: coeff-pot}, we calculate the derivative $g'(t)$ as
\[
g'(t)= 2\gamma t^{2\gamma-1}\left(1+\frac{3\gamma\sqrt{s}}{2t}\right)  + \frac{\left[(4\gamma-5)t^2 + 6(2-\gamma)\gamma\sqrt{s}t-18\gamma^2(1+\gamma)s\right]\cdot \gamma \sqrt{s}t^{2(\gamma-1)} } {2\left(t-3\gamma\sqrt{s}\right)^2}.
\]
With some basic operations, we know $4\gamma - 5 < 0$ from $\gamma \in (0,1)$.  Hence, there exists $t_1(\gamma)$ only dependent on the paramere $\gamma$ such that the derivative of the coefficient satisfies
\[
g'(t) \geq  2\gamma t^{2\gamma-1}\left(1+\frac{3\gamma\sqrt{s}}{2t}\right)
\]
for any $t \geq t_{1}(\gamma)$.
\end{itemize}
 

Summing up the time derivatives of the mixed term~\eqref{eqn: mix-2-con}, the distance square~\eqref{eqn: dist-1-con} and the potential function~\eqref{eqn: pot-1-con}, we construct the new Lyapunov function as
\begin{multline}
\label{eqn: lya-con}
\mathcal{E}(t) = \frac{t^{\gamma}(t^{\gamma}-2\gamma\sqrt{s}t^{\gamma-1})}{t-3\gamma\sqrt{s}}\left(t+\frac{3\gamma\sqrt{s}}{2}\right)\left(f(X+\sqrt{s}\dot{X}) - f(x^\star)\right) \\
+ \frac{1}{2}\|t^\gamma\dot{X}+2\gamma t^{\gamma-1}(X-x^\star)\|^2 + \gamma(1 - \gamma)t^{2(\gamma-1)}\|X - x^\star\|^2,
\end{multline}
for any $\gamma \in (0,1)$. When the parameter satisfies $\gamma=1$, that is, the critically damped case with $r=2$, the Lyapunov function above~\eqref{eqn: lya-con} is reduced to the one constructed in~\citep[(4.3)]{chen2022gradient}. With the Lyapunov function above~\eqref{eqn: lya-con}, we derive the convergence rates of the objective value and the gradient norm square, which is rigorously described by the following theorem.

\begin{theorem}
\label{thm: con}
Let $f\in \mathcal{F}_1^L(\mathbb{R}^d)$, then there exists $t_{0} := \max\{3\gamma\sqrt{s}+1, t_{1}(\gamma)\}$ only dependent on $\gamma\in(0,1)$ such that the solution $X=X(t)$ to the high-resolution differential equation~\eqref{eqn: high-ode} obeys
\begin{equation}
\label{eqn: con-gradnorm}
    \lim_{t\to\infty}t^{2\gamma+1}\|\nabla f(X+\sqrt{s}\Dot{X})\|^2=0
\end{equation}
and
\begin{equation}
\label{eqn: con-objvalue}
    f(X+\sqrt{s}\dot{X}) - f(x^\star) \leq \frac{\mathcal{E}(t_{0})}{t^{\gamma}(t^{\gamma} - 2\gamma \sqrt{s}t^{\gamma-1})}
\end{equation}
for all $t\geq t_{0}$.
\end{theorem}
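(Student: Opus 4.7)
The plan is to show that $\mathcal{E}(t)$ defined in~\eqref{eqn: lya-con} is non-negative and non-increasing along solutions of~\eqref{eqn: high-ode-gamma} for all $t\ge t_0$, and to extract both conclusions from this monotonicity together with a dissipative remainder. Non-negativity is immediate from the choice $t\ge t_0\ge 3\gamma\sqrt{s}+1$, which makes both $t-3\gamma\sqrt{s}$ and $t^\gamma-2\gamma\sqrt{s}t^{\gamma-1}$ positive, so the prefactor of $f(X+\sqrt{s}\dot X)-f(x^\star)$ is positive; convexity gives $f(X+\sqrt{s}\dot X)\ge f(x^\star)$, and the other two terms are squared norms.

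For monotonicity, I would combine the three derivatives~\eqref{eqn: mix-2-con}, \eqref{eqn: dist-1-con} and~\eqref{eqn: pot-1-con} already computed in parts (I)--(III). By design the pair $\textbf{II}_1, \textbf{I}_1$ cancel, and through the specific choice of $g(t)$ the pair $\textbf{III}_1, \textbf{I}_3$ also cancel, so the surviving pieces are
\[
\dot{\mathcal{E}}(t) \;=\; -\textbf{I}_2 - \textbf{II}_2 - \textbf{III}_2 + \textbf{III}_3 - \textbf{I}_4 .
\]
Convexity in the form $\langle X+\sqrt{s}\dot X-x^\star,\nabla f(X+\sqrt{s}\dot X)\rangle \ge f(X+\sqrt{s}\dot X)-f(x^\star)$ lower-bounds $\textbf{I}_4$ by $2\gamma t^{2\gamma-1}(1+3\gamma\sqrt{s}/(2t))(f-f^\star)$, which together with the lower bound $g'(t)\ge 2\gamma t^{2\gamma-1}(1+3\gamma\sqrt{s}/(2t))$ valid for $t\ge t_1(\gamma)$ gives $\textbf{III}_3-\textbf{I}_4\le 0$. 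Combined with the manifestly non-positive $-\textbf{I}_2$, $-\textbf{II}_2$, $-\textbf{III}_2$, this yields
\[
\dot{\mathcal{E}}(t) \;\le\; -\sqrt{s}\,g(t)\Bigl(1+\frac{3\gamma\sqrt{s}}{2t}\Bigr)\|\nabla f(X+\sqrt{s}\dot X)\|^2 \;\le\; 0 \qquad (t\ge t_0).
\]

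Integrating $\dot{\mathcal{E}}\le 0$ from $t_0$ yields $\mathcal{E}(t)\le\mathcal{E}(t_0)$; discarding the two nonnegative squared-norm terms in~\eqref{eqn: lya-con} and using $(t+3\gamma\sqrt{s}/2)/(t-3\gamma\sqrt{s})\ge 1$ isolates the factor $t^\gamma(t^\gamma-2\gamma\sqrt{s}t^{\gamma-1})$ in front of $f-f^\star$ and produces the objective bound~\eqref{eqn: con-objvalue}. Integrating the strengthened dissipation inequality from $t_0$ to $+\infty$ and using $g(t)\sim t^{2\gamma}$ as $t\to\infty$ gives the key ingredient $\int_{t_0}^{\infty} t^{2\gamma}\|\nabla f(X+\sqrt{s}\dot X)\|^2\,dt<+\infty$.

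The main obstacle is upgrading this weighted integrability to the pointwise statement~\eqref{eqn: con-gradnorm}. The boundedness of $\mathcal{E}$ provides $\|X-x^\star\|=O(t^{1-\gamma})$ and then $\|\dot X\|=O(t^{-\gamma})$ through the mixed squared norm; combined with $L$-smoothness of $\nabla f$ and the expression of $\ddot X$ read off the ODE~\eqref{eqn: high-ode-gamma}, these bound the positive part of the derivative $\tfrac{d}{dt}\bigl(t^{2\gamma+1}\|\nabla f(X+\sqrt{s}\dot X)\|^2\bigr)$. Applied to $h(t)=t^{2\gamma+1}\|\nabla f(X+\sqrt{s}\dot X)\|^2$, the elementary lemma that any nonnegative $h$ with $\int_{t_0}^{\infty}t^{-1}h(t)\,dt<\infty$ and integrable positive part $h'_+$ must satisfy $h(t)\to 0$ then delivers~\eqref{eqn: con-gradnorm}. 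This final step is where the analysis is most delicate, since the available decay rates for $\dot X$ and $\|X-x^\star\|$ degrade as $\gamma\to 0$, and one must check that the resulting bound on $h'_+$ is still integrable uniformly in $\gamma\in(0,1)$.
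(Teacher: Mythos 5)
Your handling of the non-negativity, the monotonicity of $\mathcal{E}$, and the objective bound \eqref{eqn: con-objvalue} is essentially the paper's own argument (the paper tightens $\textbf{I}_4$ with the gradient Lipschitz inequality \eqref{eqn: grad-lip} rather than plain convexity, but either suffices for the weighted integral you need). One slip there: the inequality you quote, $g'(t)\ge 2\gamma t^{2\gamma-1}\bigl(1+\tfrac{3\gamma\sqrt{s}}{2t}\bigr)$, cannot give $\textbf{III}_3-\textbf{I}_4\le 0$. After lower-bounding $\textbf{I}_4$ by convexity you are left with $\bigl[g'(t)-2\gamma t^{2\gamma-1}\bigl(1+\tfrac{3\gamma\sqrt{s}}{2t}\bigr)\bigr]\bigl(f(X+\sqrt{s}\dot X)-f(x^\star)\bigr)$, so what you need is the reverse inequality $g'(t)\le 2\gamma t^{2\gamma-1}\bigl(1+\tfrac{3\gamma\sqrt{s}}{2t}\bigr)$ --- which is in fact what holds for $t\ge t_1(\gamma)$, because the correction term in $g'$ has leading coefficient $4\gamma-5<0$. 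This is easily repaired, and the dissipation inequality and \eqref{eqn: con-objvalue} then follow as you describe.

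The genuine gap is in your final step, the passage from $\int_{t_0}^\infty u^{2\gamma}\|\nabla f(X+\sqrt{s}\dot X)\|^2\,du<\infty$ to the limit \eqref{eqn: con-gradnorm} read pointwise. The paper neither proves nor needs a pointwise statement: as its proof and \Cref{tab: continuous-case} make clear, \eqref{eqn: con-gradnorm} is to be understood with the running minimum $\min_{t_0\le u\le t}$ inside, and then it is immediate from $t^{2\gamma+1}\min_{t_0\le u\le t}\|\nabla f\|^2\lesssim \int_{t/2}^{t}u^{2\gamma}\|\nabla f\|^2\,du\to 0$. Your derivative-control lemma for $h(t)=t^{2\gamma+1}\|\nabla f(X+\sqrt{s}\dot X)\|^2$ is true as stated, but its hypothesis $\int (h')_+\,dt<\infty$ cannot be verified from the estimates you have. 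Writing $Y=X+\sqrt{s}\dot X$, the ODE gives $\dot Y=(1-\tfrac{3\gamma\sqrt{s}}{t})\dot X-\sqrt{s}\bigl(1+\tfrac{3\gamma\sqrt{s}}{2t}\bigr)\nabla f(Y)$, and $L$-Lipschitzness of $\nabla f$ (note $f$ is not assumed $C^2$, so the favorable Hessian sign is unavailable) yields $(h')_+\le (2\gamma+1)t^{2\gamma}\|\nabla f(Y)\|^2+2L\,t^{2\gamma+1}\|\dot Y\|\,\|\nabla f(Y)\|$; even using $\|\dot X\|=O(t^{-\gamma})$ from the energy and $\|\nabla f(Y)\|=O(t^{-\gamma})$ from \eqref{eqn: con-objvalue} with $L$-smoothness, the cross term $t^{2\gamma+1}\|\dot X\|\,\|\nabla f(Y)\|$ is only $O(t)$, and a Cauchy--Schwarz against the finite integral would require $\int u^{2\gamma+2}\|\dot X\|^2\,du<\infty$, which the Lyapunov analysis does not supply (there is no $\|\dot X\|^2$ dissipation in \eqref{eqn: con-proof-1}). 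The obstruction has nothing to do with uniformity in $\gamma$; it is present for each fixed $\gamma\in(0,1)$. So your outline cannot close the pointwise claim; prove the running-minimum version instead, exactly as the paper does.
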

\begin{proof}[Proof of~\Cref{thm: con}]
From the time derivatives of the mixed term~\eqref{eqn: mix-2-con} and the distance square~\eqref{eqn: dist-1-con}, we can obtain the following equality as
\[
-\textbf{I}_2 - \textbf{II}_2 = -2\gamma(1 - \gamma^2) \|X - x^\star\|^2
\] 
Combined with the time derivative of the potential function~\eqref{eqn: pot-1-con}, we obtain the time derivative of the Lyapunov function~\eqref{eqn: lya-con}
\begin{multline}
\frac{d\mathcal{E}(t)}{dt} =  -2\gamma(1 - \gamma^2) \|X - x^\star\|^2 - \frac{(t+3\gamma\sqrt{s}/ 2)^2}{t(t - 3\gamma\sqrt{s})} \cdot  \sqrt{s}t^{\gamma}(t^{\gamma} - 2\gamma\sqrt{s}t^{\gamma-1}) \big\|\nabla f(X+\sqrt{s}\dot{X})\big\|^2 \\
                             - 2\gamma t^{2\gamma-1}\left(1+\frac{3\gamma\sqrt{s}}{2t}\right) \left[\big\langle X+\sqrt{s}\dot{X}-x^\star, \nabla f(X+\sqrt{s}\dot{X})\big\rangle  - \left( f(X + \sqrt{s}\dot{X}) - f(x^\star) \right) \right]. \label{eqn: con-proof-1}
\end{multline}
With the gradient Lipschtiz inequality~\eqref{eqn: grad-lip}, the the estimate of the time derivative~\eqref{eqn: con-proof-1} can be loosen as
\[
\frac{d\mathcal{E}(t)}{dt} \leq -\sqrt{s}t^{\gamma}(t^{\gamma} - \gamma\sqrt{s}t^{\gamma-1}) \big\|\nabla f(X+\sqrt{s}\dot{X})\big\|^2.  
\]
which implies the Lyapunov function $\mathcal{E}(t)$ given in~\eqref{eqn: lya-con} does not increase with the time $t \geq t_0$, leading to \eqref{eqn: con-objvalue}. Meanwhile, we can obtain the following inequality as
\[
\frac{\sqrt{s}}{2}\int_{t_{0}}^{t} u^{2\gamma} \big\|\nabla f(X+\sqrt{s}\dot{X})\big\|^2 du \leq  \int_{t_{0}}^{t} \sqrt{s}u^{\gamma}(u^{\gamma} - \gamma\sqrt{s}u^{\gamma-1}) \big\|\nabla f(X+\sqrt{s}\dot{X})\big\|^2 du \leq \mathcal{E}(t_0)
\]
for any time $t \geq t_0$, which implies that
\[
0 \leq \frac{2^{2\gamma+1} - 1}{(2\gamma+1)2^{2\gamma+1}} \lim_{t\rightarrow \infty}\left( t^{2\gamma+1}\min_{t_0 \leq u \leq t} \big\|\nabla f(X+\sqrt{s}\dot{X}) \big\|^2\right) \leq \lim_{t\rightarrow \infty} \int_{\frac{t}{2}}^{t} u^{2\gamma} \big\|\nabla f(X+\sqrt{s}\dot{X})\big\|^2 du = 0.
\]
It follows that \eqref{eqn: con-gradnorm} holds. The proof is complete. 
\end{proof}

\section{Underdamped Nesterov's acceleration}
\label{sec: discrete-case}
In this section, we carry out the transition to the discrete convergence rates of~\texttt{NAG} via the phase-space representation, which is based on the continuous perspective from the high-resolution differential equation.  By introducing the new sequence $\{v_{k}\}_{k=0}^{\infty}$ with $v_{k} = (x_{k} - x_{k-1})/\sqrt{s}$, we show the phase-space representation for the discrete~\texttt{NAG} as
\begin{equation}
\label{eqn: phase-x}
\left\{\begin{aligned}
& x_{k+1} - x_{k} = \sqrt{s}v_{k+1}, \\
& v_{k+1} - v_{k} = -\frac{3\gamma}{k+3\gamma-1}\cdot v_k - \sqrt{s}\nabla f\left(y_{k}\right),
\end{aligned} \right.
\end{equation}
with any initial $x_{0}\in\mathbb{R}^d$ and $v_0=0$, where the variable $y_{k}$ satisfies the following relation as
\begin{equation}
\label{eqn: nag-2-yx}
    y_k = x_{k} + \frac{k-1}{k+3\gamma-1}(x_{k}-x_{k-1}) = x_k + \frac{k-1}{k+3\gamma-1}\cdot\sqrt{s}v_k.
\end{equation}
Then, we proceed to the step of constructing the Lyapunov function.

\begin{itemize}
\item[(\textbf{I})] Similarly, we first consider the mixed term $\frac12\| (k-1)^{\gamma}s^{\gamma/2}v_{k} + 2\gamma k^{\gamma-1}s^{(\gamma-1)/2}(x_{k} - x^\star) \|^2$, which is consistent with the continuous case in the previous section. With the phase-space representation~\eqref{eqn: phase-x} and the relation among the variables~\eqref{eqn: nag-2-yx}, we have
\begin{align}
     &\big[k^{\gamma}s^{\gamma/2}v_{k+1} + 2\gamma(k+1)^{\gamma-1}s^{(\gamma-1)/2}(x_{k+1} - x^\star)\big] \nonumber \\
     &\mathrel{\phantom{\big[k^{\gamma}s^{\gamma/2}v_{k+1} + 2\gamma(k+1)^{\gamma-1}s^{(\gamma-1)/2}}} - \big[(k-1)^{\gamma}s^{\gamma/2}v_{k} + 2\gamma k^{\gamma-1}s^{(\gamma-1)/2}(x_{k} - x^\star)\big]  \nonumber \\
 & \mathrel{\phantom{=}}  =    s^{\gamma/2}(k - 1) \cdot \frac{k^{\gamma} + 2\gamma(k+1)^{\gamma-1} - (k-1)^{\gamma-1}(k+3\gamma-1) }{k + 3\gamma - 1} \cdot v_{k} \nonumber \\
    & \mathrel{\phantom{==}}  - s^{(\gamma+1)/2}\left[ k^{\gamma} + 2\gamma(k+1)^{\gamma-1}\right] \nabla f(y_{k}) + 2\gamma s^{(\gamma-1)/2}\left[(k+1)^{\gamma-1} - k^{\gamma-1} \right](x_{k} - x^\star) \label{eqn: nag-1-mix}
\end{align}
and
\begin{align}
     &\big[k^{\gamma}s^{\gamma/2}v_{k+1} + 2\gamma(k+1)^{\gamma-1}s^{(\gamma-1)/2}(x_{k+1} - x^\star)\big] \\
     &\mathrel{\phantom{\big[k^{\gamma}s^{\gamma/2}v_{k+1} + 2\gamma(k+1)^{\gamma-1}s^{(\gamma-1)/2}}} + \big[(k-1)^{\gamma}s^{\gamma/2}v_{k} + 2\gamma k^{\gamma-1}s^{(\gamma-1)/2}(x_{k} - x^\star)\big]  \nonumber \\
 & \mathrel{\phantom{=}}  =    s^{\gamma/2}(k - 1) \cdot \frac{k^{\gamma} + 2\gamma(k+1)^{\gamma-1} + (k-1)^{\gamma-1}(k+3\gamma-1) }{k + 3\gamma - 1} \cdot v_{k} \nonumber \\
    & \mathrel{\phantom{==}}  - s^{(\gamma+1)/2}\left[ k^{\gamma} + 2\gamma(k+1)^{\gamma-1}\right] \nabla f(y_{k}) + 2\gamma s^{(\gamma-1)/2}\left[(k+1)^{\gamma-1} + k^{\gamma-1} \right](x_{k} - x^\star) \label{eqn: nag-2-mix}
\end{align}
%
With~\eqref{eqn: nag-1-mix} and~\eqref{eqn: nag-2-mix}, we obtain the iterative difference of the mixed term as
\begin{align}
   \frac12 \big\|k^{\gamma}&s^{\gamma/2}v_{k+1} + 2\gamma (k+1)^{\gamma-1}s^{(\gamma-1)/2}(x_{k+1} - x^\star)\big\|^2 \nonumber \\
   &  \mathrel{\phantom{v_{k+1} + 2\gamma (k+1)^{\gamma-1}s^{(\gamma-1)}}} + \frac12 \big\|(k-1)^{\gamma}s^{\gamma/2}v_{k} + 2\gamma k^{\gamma-1}s^{(\gamma-1)/2}(x_{k} - x^\star)\big\|^2 \nonumber \\
=  & \frac{s^{\gamma}}{2}A_{k}\|v_{k}\|^2 + \frac{s^{\gamma-1}}{2}B_{k}\|x_{k} - x^\star\|^2 + \frac{s^{\gamma+1}}{2}C_{k}\|\nabla f(y_{k})\|^2  \nonumber \\
   & + \frac{s^{\gamma+1/2}}{2}D_{k}\langle\nabla f(y_{k}), v_{k}\rangle  + \frac{s^{\gamma}}{2}E_{k}\langle\nabla f(y_{k}), x_{k} - x^\star\rangle  + \frac{s^{\gamma-1/2}}{2}F_{k}\langle v_{k}, x_{k} - x^\star\rangle,  \label{eqn: nag-3-mix}
\end{align}
where the coefficients are represented respectively as
\begin{align*}
& A_{k} = \left(\frac{k-1}{k+3\gamma-1}\right)^2 \cdot \big\{[k^{\gamma} + 2\gamma(k+1)^{\gamma-1}]^2 - (k-1)^{2(\gamma-1)}(k+3\gamma-1)^2 \big\}, \\
& B_{k} = 4\gamma^2  \big[(k+1)^{2(\gamma-1)} - k^{2(\gamma-1)} \big],                              \\
& C_{k} = \big[k^{\gamma} + 2\gamma(k+1)^{\gamma-1}\big]^2,                                            \\
& D_{k} = - \frac{2 (k - 1) \big[k^{\gamma} + 2\gamma(k+1)^{\gamma-1}\big]^2}{k + 3\gamma - 1 },   \\
& E_{k} = - 4\gamma(k+1)^{\gamma-1} \big[k^{\gamma} + 2\gamma(k+1)^{\gamma-1}\big], \\
& F_{k} = \frac{4\gamma (k - 1)}{k + 3\gamma -1} \cdot \big\{ (k+1)^{\gamma-1}[k^{\gamma} + 2\gamma(k+1)^{\gamma-1}] - k^{\gamma-1}(k-1)^{\gamma-1}(k + 3\gamma-1 )\big\}.
\end{align*}

With the relation among the variables~\eqref{eqn: nag-2-yx}, the iterative difference of the mixed term~\eqref{eqn: nag-3-mix} can be reformulated as
\begin{align}
   \frac12 \big\|k^{\gamma}&s^{\gamma/2}v_{k+1} + 2\gamma (k+1)^{\gamma-1}s^{(\gamma-1)/2}(x_{k+1} - x^\star)\big\|^2 \nonumber \\
   &  \mathrel{\phantom{v_{k+1} + 2\gamma (k+1)^{\gamma-1}s^{(\gamma-1)}}} + \frac12 \big\|(k-1)^{\gamma}s^{\gamma/2}v_{k} + 2\gamma k^{\gamma-1}s^{(\gamma-1)/2}(x_{k} - x^\star)\big\|^2 \nonumber  \\
=  & \underbrace{\frac{s^{\gamma}}{2}A_{k}\|v_{k}\|^2}_{\textbf{I}_1} +  \underbrace{\frac{s^{\gamma-1}}{2}B_{k}\|x_{k} - x^\star\|^2}_{\textbf{I}_2}  + \underbrace{\frac{s^{\gamma+1}}{2}C_{k}\|\nabla f(y_{k})\|^2}_{\textbf{I}_3}   \nonumber \\
   & + \underbrace{\frac{s^{\gamma}}{2}G_k\langle\nabla f(y_{k}), y_{k} - x_{k}\rangle}_{\textbf{I}_4} +\underbrace{ \frac{s^{\gamma}}{2}E_{k}\langle\nabla f(y_{k}), y_{k} - x^\star\rangle }_{\textbf{I}_5} + \underbrace{ \frac{s^{\gamma-1/2}}{2}F_{k}\langle v_{k}, x_{k} - x^\star\rangle}_{\textbf{I}_6},  \label{eqn: nag-4-mix}
\end{align}
where the coefficient $G_{k}$ satisfies
\[
G_k = \frac{k+3\gamma-1}{k-1} \cdot  D_{k} -  E_{k}  = -2k^{\gamma}\left[k^{\gamma} + 2\gamma(k+1)^{\gamma-1}\right] \leq 0.    
\]
With some basic calculus, it is easy to infer that the coefficient $G_k$ decreases for any $\gamma \in (0,1)$. In addition, we also observe that the coefficients satisfy the following relation as
\begin{equation}
\label{eqn: mix-ceg}
2C_{k} + E_{k} + G_{k} = 0.
\end{equation}

\item[(\textbf{II})] To eradicate the term $\textbf{I}_6$, we consider the distance square $\|x_{k-1} - x^\star\|^2$ with an undetermined coefficient $\frac{s^{\gamma-1}H_{k}}{2}$. With the phase-space representation~\eqref{eqn: phase-x}, the difference is calculated as
\begin{align}
    &  \frac{s^{\gamma-1}H_{k+1}}{2}\|x_{k} - x^\star\|^2 - \frac{s^{\gamma-1}H_k}{2}\|x_{k-1} - x^\star\|^2  \nonumber \\
  =  & \frac{s^{\gamma-1}H_{k+1}}{2}\left\langle x_{k} - x_{k-1}, x_{k} + x_{k-1} - 2x^\star \right\rangle - \frac{s^{\gamma-1}(H_{k} - H_{k+1}) }{2} \|x_{k-1} - x^\star\|^2  \nonumber \\
  =  & \frac{s^{\gamma-1}H_{k+1}}{2}\left\langle \sqrt{s}v_{k},2(x_{k} - x^\star) - \sqrt{s}v_{k} \right\rangle - \frac{s^{\gamma-1}(H_{k} - H_{k+1}) }{2} \|x_{k-1} - x^\star\|^2  \nonumber \\
  =  & \underbrace{s^{\gamma-\frac12} H_{k+1} \left\langle v_{k}, x_{k} - x^\star \right\rangle}_{\mathbf{II}_1} - \underbrace{\frac{s^{\gamma}H_{k+1}}{2} \|v_{k}\|^2}_{\mathbf{II}_2} - \underbrace{ \frac{s^{\gamma-1}(H_{k} - H_{k+1}) }{2} \|x_{k-1} - x^\star\|^2}_{\mathbf{II}_3}. \label{eqn: nag-1-dist}
\end{align}
With the iterative difference of the distance square~\eqref{eqn: nag-1-dist},  the desired result $\textbf{II}_1 + \mathbf{I}_6 = 0$ requires the coefficients satisfies 
\begin{align*}
H_{k} & = - \frac{F_{k-1}}{2}\\
      & = - \frac{2\gamma (k-2)}{k + 3\gamma -2} \cdot \big\{ k^{\gamma-1}[(k-1)^{\gamma} + 2\gamma k^{\gamma-1}] - (k-1)^{\gamma-1}(k-2)^{\gamma-1}(k + 3\gamma - 2 )\big\} \\
      & = \frac{2\gamma (k-2)}{k + 3\gamma -2} \cdot \big\{ (1 - \gamma ) k^{2\gamma - 2} + O\left(k^{2\gamma-3}\right) \big\}.
\end{align*}
Hence, we know that there exists $K_1(\gamma) \geq 2$ such that the coefficient $H_{k}$ is no less than zero and does not increase for any $k \geq K_1(\gamma)$. In other words, when the iteration number is large enough, that is, $k \geq K_2(\gamma)$, the coefficient satisfies $H_{k} = - F_{k-1}/2$ such that both the terms $\textbf{II}_2$ and $\textbf{II}_{3}$ are non-negative.

\item[(\textbf{III})] To eradicate the term $\textbf{I}_4$, we consider the potential function $f(y_{k-1}) - f(x^\star)$ with the coefficient $- \frac{s^{\gamma}G_{k}}{2}$. Then, we obtain the iterative difference of the potential function as 
\begin{multline}
    - \frac{s^{\gamma}G_{k+1}}{2} (f(y_{k}) - f(x^\star)) + \frac{s^{\gamma}G_{k}}{2} (f(y_{k-1}) - f(x^\star)) \\
 =   - \frac{s^{\gamma}G_{k}}{2} (f(y_{k}) - f(y_{k-1})) - \frac{s^{\gamma}(G_{k+1} - G_{k})}{2} (f(y_{k}) - f(x^\star)). \label{eqn: nag-1-pot}
\end{multline}
Putting the iterative sequence $y_{k+1}$ and $y_{k}$ into the gradient Lipschtiz inequality~\eqref{eqn: grad-lip}, we obtain the following inequality as
\begin{equation}
 f(y_{k}) - f(y_{k-1}) \leq\left\langle \nabla f(y_{k}), y_{k} - y_{k-1} \right\rangle - \frac{1}{2L}\left\|\nabla f(y_{k}) - \nabla f(y_{k-1})\right\|^2.\label{eqn: grad-lip-1}
\end{equation}
With the inequality~\eqref{eqn: grad-lip-1}, we proceed to estimate the iterative difference of the potential function~\eqref{eqn: nag-1-pot} as
\begin{align}
    - \frac{s^{\gamma}G_{k+1}}{2} (f(y_{k}) - &f(x^\star)) + \frac{s^{\gamma}G_{k}}{2} (f(y_{k-1}) - f(x^\star))\nonumber \\
\leq  & - \underbrace{\frac{s^{\gamma}G_{k}}{2} \left(\left\langle \nabla f(y_{k}), y_{k} - y_{k-1} \right\rangle - \frac{1}{2L}\left\|\nabla f(y_{k}) - \nabla f(y_{k-1})\right\|^2\right)}_{\textbf{III}_1} \nonumber  \\
 &- \underbrace{\frac{s^{\gamma}(G_{k+1} - G_{k})}{2} \left(f(y_{k}) - f(x^\star)\right)}_{\textbf{III}_2}. \label{eqn: nag-2-pot}
\end{align}
Since the step size satisfies $0 < s \leq 1/L$, we utilize the gradient step of~\texttt{NAG} to obtain the following inequality as
\begin{align}
\textbf{I}_3 + \textbf{I}_4  - \textbf{III}_1 \leq & \frac{s^{\gamma+1}}{2}C_{k}\|\nabla f(y_{k})\|^2   \nonumber \\ 
                                                   & - \frac{s^{\gamma}G_{k}}{2} \left( - s \left\langle \nabla f(y_{k}),  \nabla f(y_{k-1}) \right\rangle - \frac{s}{2}\left\|\nabla f(y_{k}) - \nabla f(y_{k-1})\right\|^2\right) \nonumber \\ 
                                                =  &  - \frac{s^{\gamma+1}E_{k}}{4}\|\nabla f(y_{k})\|^2 + \frac{s^{\gamma+1}G_{k}}{4} \|\nabla f(y_{k-1})\|^2,  \label{eqn: nag-3-pot}
\end{align}
where the last equality follows the relation among the coefficients~\eqref{eqn: mix-ceg}.
\end{itemize}

Summing up the iterative derivatives of the mixed term~\eqref{eqn: nag-4-mix}, the distance square~\eqref{eqn: nag-1-dist} and the potential function~\eqref{eqn: nag-2-pot}, we construct the new Lyapunov function as
\begin{align}
\mathcal{E}(k)  = & s^{\gamma}k^{\gamma}\left[k^{\gamma} + 2\gamma(k+1)^{\gamma-1}\right] \left( f(y_{k-1}) - f(x^\star) \right)  \nonumber \\
                  & + \frac12 \big\|(k-1)^{\gamma}s^{\gamma/2}v_{k} + 2\gamma k^{\gamma-1}s^{(\gamma-1)/2}(x_{k} - x^\star)\big\|^2 \nonumber \\
                  & + 2 \gamma s^{\gamma-1} \left\{ \frac{(k-2)k^{\gamma-1}[(k-1)^{\gamma} + 2\gamma k^{\gamma-1}]}{k + 3\gamma -2} - (k-1)^{\gamma-1}(k-2)^{\gamma} \right\} \|x_{k-1} - x^\star\|^2. \label{eqn: lyapunov-nag}
\end{align}
for any $\gamma \in (0,1)$. When the parameter satisfies $\gamma=1$, that is, the critically damped case with $r=2$, the Lyapunov function above~\eqref{eqn: lyapunov-nag} is reduced to the one constructed in~\citep[(4.9)]{chen2022gradient}. With the Lyapunov function above~\eqref{eqn: lyapunov-nag}, we derive the convergence rates of the objective value and the gradient norm square, which is rigorously described by the following theorem. 

%

\begin{theorem}
\label{thm: conv-rate-nag}
Let $f \in \mathcal{F}_{L}^{1}(\mathbb{R}^d)$. There exists $K_{0}:=K_0(\gamma)$ only depending on $\gamma \in(-1,2)$ such that the iterative sequence $\{y_{k}\}_{k=0}^{\infty}$ generated by \texttt{NAG} with any step size $0 < s \leq 1/L$ obeys
\begin{equation}
\label{eqn: nag-gradnorm}
    \lim_{k\to\infty}\left[s^{\gamma+1}k^{2\gamma+1}\min_{0\leq i\leq k}\|\nabla f(y_{i})\|^{2}\right] = 0,
\end{equation}
and 
\begin{equation}
\label{eqn: nag-objvalue}
f(y_{k}) - f(x^\star)\leq\frac{\mathcal{E}(K_{0})}{s^{\gamma}(k+1)^{2\gamma}}
\end{equation}
for any $k \geq K_0$. Especially, let the step size be set as $s=1/L$, then we have
\[
\lim_{k\to\infty}\left[\frac{k^{2\gamma+1}}{L^{\gamma+1}}\min_{0\leq i\leq k}\|\nabla f(y_{i})\|^{2}\right] = 0\quad \text{and} \quad f(y_{k}) - f(x^\star)\leq\frac{L^{\gamma}\mathcal{E}(K_{0})}{(k+1)^{2\gamma}}
\]
for any $k \geq K_0$.
\end{theorem}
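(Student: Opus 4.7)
The plan is to mirror the continuous proof of Theorem 2.1 at the discrete level: show that the Lyapunov function $\mathcal{E}(k)$ in~\eqref{eqn: lyapunov-nag} is eventually non-increasing along the iterates of \texttt{NAG}, with a deficit of order $s^{\gamma+1}k^{2\gamma}\|\nabla f(y_k)\|^2$. Monotonicity, together with the fact that the coefficient of $f(y_{k-1})-f(x^\star)$ inside $\mathcal{E}(k)$ grows like $s^{\gamma}k^{2\gamma}$, will immediately yield~\eqref{eqn: nag-objvalue}; a telescoping/integration argument on the deficit will then yield the $o$-rate~\eqref{eqn: nag-gradnorm} for the minimum gradient-norm square.

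First, I assemble $\mathcal{E}(k+1)-\mathcal{E}(k)$ from the three blocks already prepared in the construction: the mixed-term difference~\eqref{eqn: nag-4-mix}, the distance-square difference~\eqref{eqn: nag-1-dist} with the specific choice $H_k = -F_{k-1}/2$, and the potential-function difference~\eqref{eqn: nag-2-pot}. By design, $\mathbf{II}_1$ cancels $\mathbf{I}_6$ exactly, while the combination $\mathbf{I}_3+\mathbf{I}_4-\mathbf{III}_1$ collapses through the relation $2C_k+E_k+G_k=0$ to the clean expression~\eqref{eqn: nag-3-pot}. What remains to be controlled are the following groups: the coefficient of $f(y_k)-f(x^\star)$, namely $\mathbf{I}_5+\mathbf{III}_2$; the coefficient of $\|v_k\|^2$, namely $\mathbf{I}_1-\mathbf{II}_2$; the coefficient of $\|x_k-x^\star\|^2$, namely $\mathbf{I}_2$; and the coefficient of $\|x_{k-1}-x^\star\|^2$ coming from $-\mathbf{II}_3$.

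Second, I dispose of the potential term using convexity: since $E_k\leq 0$, the inequality $\langle\nabla f(y_k),y_k-x^\star\rangle\geq f(y_k)-f(x^\star)$ turns $\mathbf{I}_5$ into a bound of the form $\tfrac{s^\gamma E_k}{2}(f(y_k)-f(x^\star))$. Combined with $-\mathbf{III}_2=-\tfrac{s^\gamma(G_{k+1}-G_k)}{2}(f(y_k)-f(x^\star))$, asymptotic expansions $E_k=-4\gamma k^{2\gamma-1}+O(k^{2\gamma-2})$ and $G_{k+1}-G_k=-4\gamma k^{2\gamma-1}+O(k^{2\gamma-2})$ show the leading terms cancel and the residual has non-positive sign for $k$ large. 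The coefficient $B_k=4\gamma^2[(k+1)^{2(\gamma-1)}-k^{2(\gamma-1)}]$ is already $\leq 0$ because $\gamma<1$, so $\mathbf{I}_2\leq 0$ automatically. The two $H$-terms are controlled by the observation already made just after~\eqref{eqn: nag-1-dist}: there is $K_1(\gamma)$ beyond which $H_k\geq 0$ and $H_{k+1}\leq H_k$, so $-\mathbf{II}_3\leq 0$. Finally, for the $\|v_k\|^2$ coefficient, a term-by-term expansion of $A_k$ (using $(k-1)^{2(\gamma-1)}(k+3\gamma-1)^2=k^{2\gamma}+O(k^{2\gamma-1})$ and $[k^\gamma+2\gamma(k+1)^{\gamma-1}]^2=k^{2\gamma}+4\gamma k^{2\gamma-1}+O(k^{2\gamma-2})$) shows $A_k\asymp k^{2\gamma-2}$ while $H_{k+1}\asymp 2\gamma(1-\gamma)k^{2\gamma-2}$, and a careful comparison of leading constants gives $A_k-H_{k+1}\leq 0$ for $k$ beyond some $K_2(\gamma)$. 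Taking $K_0=\max\{K_1(\gamma),K_2(\gamma)\}$ produces
\[
\mathcal{E}(k+1)-\mathcal{E}(k)\leq -\tfrac{s^{\gamma+1}}{4}\bigl(-E_k\bigr)\|\nabla f(y_k)\|^2 + \tfrac{s^{\gamma+1}G_k}{4}\|\nabla f(y_{k-1})\|^2,
\]
and since $-E_k, -G_k\asymp k^{2\gamma}$, a simple further re-grouping (absorbing the $G_k$ term into a shifted telescope) gives a one-sided inequality $\mathcal{E}(k+1)-\mathcal{E}(k)\leq -c\, s^{\gamma+1} k^{2\gamma}\|\nabla f(y_k)\|^2$ for $k\geq K_0$ with a constant $c>0$ depending only on $\gamma$.

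Once monotonicity is in hand, $\mathcal{E}(k)\leq\mathcal{E}(K_0)$ for $k\geq K_0$, and isolating the non-negative potential term from~\eqref{eqn: lyapunov-nag} gives $s^\gamma k^{2\gamma}(f(y_{k-1})-f(x^\star))\leq\mathcal{E}(K_0)$, which is~\eqref{eqn: nag-objvalue} after a trivial index shift. Summing the deficit inequality over $i=K_0,\dots,k$ yields $\sum_{i=K_0}^{k} i^{2\gamma}\|\nabla f(y_i)\|^2\leq \mathcal{E}(K_0)/(cs^{\gamma+1})$, and since $\sum_{i=\lceil k/2\rceil}^{k} i^{2\gamma}\asymp k^{2\gamma+1}$, the minimum-versus-sum trick from the continuous case (applied to the tail) gives $s^{\gamma+1}k^{2\gamma+1}\min_{K_0\leq i\leq k}\|\nabla f(y_i)\|^2\to 0$, which upgrades to $\min_{0\leq i\leq k}$ trivially since the initial block is finite; this is~\eqref{eqn: nag-gradnorm}. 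The step-size specialization $s=1/L$ is a direct substitution. I expect the main obstacle to be Step 3, specifically the coefficient comparison $A_k\leq H_{k+1}$: both sides are differences of large quantities of order $k^{2\gamma}$ whose leading terms cancel, so one must expand through the subleading order $k^{2\gamma-2}$ and track the sign of the resulting constant; this is routine but delicate, and crucially exploits $\gamma<1$, which is precisely what distinguishes the underdamped regime from the critically-damped case already handled in~\citep{chen2022gradient}.
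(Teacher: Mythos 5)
Your overall route is the paper's own: the same Lyapunov function \eqref{eqn: lyapunov-nag}, the same assembly of \eqref{eqn: nag-4-mix}, \eqref{eqn: nag-1-dist} and \eqref{eqn: nag-2-pot}, and the same half-interval min-versus-sum argument at the end. The genuine gap is at the one point where you deviate. After the cancellation \eqref{eqn: nag-3-pot}, the surviving gradient terms are $-\tfrac{s^{\gamma+1}E_k}{4}\|\nabla f(y_k)\|^2+\tfrac{s^{\gamma+1}G_k}{4}\|\nabla f(y_{k-1})\|^2$, and since $E_k\le 0$ the first of these is \emph{non-negative}, of order $\gamma s^{\gamma+1}k^{2\gamma-1}\|\nabla f(y_k)\|^2$. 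You treat $\mathbf{I}_5$ with plain convexity, $\langle\nabla f(y_k),y_k-x^\star\rangle\ge f(y_k)-f(x^\star)$, which only produces the $f$-term and leaves that positive gradient term untouched; yet your displayed conclusion records it as $-\tfrac{s^{\gamma+1}}{4}(-E_k)\|\nabla f(y_k)\|^2$, i.e.\ with its sign flipped, and your monotonicity claim (hence $\mathcal{E}(k)\le\mathcal{E}(K_0)$ and \eqref{eqn: nag-objvalue}) does not follow from the steps you state. The paper avoids this precisely by invoking the strengthened inequality \eqref{eqn: grad-lip2} (the gradient Lipschitz inequality at the pair $(y_k,x^\star)$, using $0<s\le 1/L$): multiplying $f(y_k)-f(x^\star)\le\langle\nabla f(y_k),y_k-x^\star\rangle-\tfrac{s}{2}\|\nabla f(y_k)\|^2$ by $\tfrac{s^\gamma}{2}E_k\le 0$ produces exactly the term $+\tfrac{s^{\gamma+1}E_k}{4}\|\nabla f(y_k)\|^2$ that cancels the positive one, leaving only the clean deficit $\tfrac{s^{\gamma+1}G_k}{4}\|\nabla f(y_{k-1})\|^2$ together with the non-positive $f$-term governed by $G_{k+1}-G_k-E_k=\gamma k^{2\gamma-2}+O(k^{2\gamma-3})>0$ (this is where $\gamma>0$ is actually used, and it requires computing the next-order constant, not just noting that the leading $-4\gamma k^{2\gamma-1}$ terms cancel). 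One could in principle patch your convexity-only route by a shifted-telescope modification of $\mathcal{E}$ exploiting $|G_{k+1}|\gg|E_k|$, but that is additional work you only gesture at; the direct fix is \eqref{eqn: grad-lip2}.

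Two smaller points. The deficit you should retain is the $\|\nabla f(y_{k-1})\|^2$ term; your claimed per-step bound $-c\,s^{\gamma+1}k^{2\gamma}\|\nabla f(y_k)\|^2$ obtained by ``re-grouping'' is not a per-step statement (a shifted telescope alters the Lyapunov function), and it is unnecessary: summability of $k^{2\gamma}\|\nabla f(y_{k-1})\|^2$ already yields \eqref{eqn: nag-gradnorm}, with the restriction to $i\ge K_0$ removed trivially as you note. Also, you identify the comparison $A_k\le H_{k+1}$ as the main obstacle; in the paper this is a one-line estimate, since $A_k\le 0$ outright for large $k$ from $(k+1)^\gamma+2\gamma(k+2)^{\gamma-1}-k^{\gamma-1}(k+3\gamma)=-\tfrac{9\gamma(1-\gamma)}{2}k^{\gamma-2}+O(k^{\gamma-3})$, while $H_{k+1}\ge 0$ separately; the truly delicate sign is the one discussed above.
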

\begin{proof}[Proof of~\Cref{thm: conv-rate-nag}]
Taking the basic binomial expansion, we obtain the following expression as
\[
(k+1)^{\gamma} + 2\gamma(k+2)^{\gamma-1} - k^{\gamma-1}(k+3\gamma) = - \frac{9\gamma(1-\gamma)}{2} \cdot k^{\gamma-2} + O\left(  k^{\gamma-3} \right).
\]
Hence, we know that there exists $K_2(\gamma)$ only dependent on the parameter $\gamma$ such that 
\[
A_{k} =  \left(\frac{k}{k+3\gamma}\right)^2 \cdot \big\{[(k+1)^{\gamma} + 2\gamma(k+2)^{\gamma-1}]^2 - k^{2(\gamma-1)}(k+3\gamma)^2 \big\} \leq 0
\]
for any $k \geq K_2(\gamma)$. In addition, we also know $B_{k} = 4\gamma^2  \big[(k+2)^{2(\gamma-1)} - (k+1)^{2(\gamma-1)} \big] \leq 0$ according to the parameter $\gamma \in (0, 1)$. Combined with the iterative derivatives of the mixed term~\eqref{eqn: nag-4-mix}, the distance square~\eqref{eqn: nag-1-dist} and the potential function~\eqref{eqn: nag-2-pot}, it follows the iterative difference of the Lyapunov function~\eqref{eqn: lyapunov-nag} as
\begin{align}
\mathcal{E}(k+1) - \mathcal{E}(k)  \leq &  - \frac{s^{\gamma+1}E_{k}}{4}\|\nabla f(y_{k})\|^2 + \frac{s^{\gamma+1}G_{k}}{4} \|\nabla f(y_{k-1})\|^2 \nonumber \\
                                        &  + \frac{s^{\gamma}}{2}E_{k}\langle\nabla f(y_{k}), y_{k} - x^\star\rangle -\frac{s^{\gamma}(G_{k+1} - G_{k})}{2} \left(f(y_{k}) - f(x^\star)\right) \nonumber \\
                                     =   &  \frac{s^{\gamma}}{2}E_{k} \left[ \langle\nabla f(y_{k}), y_{k} - x^\star\rangle - \frac{s}{2}\|\nabla f(y_{k})\|^2 - \left( f(y_{k}) - f(x^\star)\right) \right] \nonumber \\
                                        & - \frac{s^{\gamma}(G_{k+1} - G_{k} - E_{k})}{2} \left(f(y_{k}) - f(x^\star)\right)  + \frac{s^{\gamma+1}G_{k}}{4} \|\nabla f(y_{k-1})\|^2.    \label{eqn: lyapunov-nag-proof-1}
\end{align}
Putting the iterative variable $y_{k}$ and the unique minimizer $x^\star$ into the gradient Lipschitz inequality~\eqref{eqn: grad-lip}, we have
\begin{equation}
\label{eqn: grad-lip2}
f(y_{k}) - f(x^\star) \leq  \langle\nabla f(y_{k}), y_{k} - x^\star\rangle - \frac{s}{2}\|\nabla f(y_{k})\|^2,
\end{equation}
which is due to the step size satisfies $0 < s \leq 1/L$. 
For the coefficients,
\begin{align}
G_{k+1} - G_{k} - E_{k} & = -2(k+1)^{\gamma}\left[(k+1)^{\gamma} + 2\gamma(k+2)^{\gamma-1} \right] +2 \left[k^{\gamma} + 2\gamma(k+1)^{\gamma-1} \right]^2 \nonumber \\
                        & = \gamma k^{2\gamma-2} + O\left( k^{2\gamma-3}\right).  \label{eqn: lyapunov-nag-proof-2}
\end{align} 
Let $K_0:=K_0(\gamma) = \max\{ K_1(\gamma), K_2(\gamma) \} $. With~\eqref{eqn: lyapunov-nag-proof-1},~\eqref{eqn: grad-lip2} and~\eqref{eqn: lyapunov-nag-proof-2}, we have 
\[
\mathcal{E}(k+1) - \mathcal{E}(k) \leq - \frac{s^{\gamma+1}k^{\gamma} \left( k^{\gamma} + 2\gamma(k+1)^{\gamma-1} \right)}{2} \|\nabla f(y_{k-1})\|^2,
\]
for any $k \geq K_0$.  Therefore, we obtain the convergence rate of the objective value~\eqref{eqn: nag-objvalue} and the following limitation as
\[
\lim_{k\rightarrow \infty} \sum_{i=k}^{\infty} s^{\gamma+1} i^{2\gamma} \| \nabla f(y_{i-1}) \|^2 = 0,
\]
which implies the convergence rate of the gradient norm square~\eqref{eqn: nag-gradnorm} as
\[
0 \leq \lim_{k\to\infty}\frac{s^{\gamma+1}k^{2\gamma+1}}{2^{2\gamma}}\min_{0\leq i\leq k}\|\nabla f(y_{i})\|^{2}\leq\lim_{k\to\infty}\sum_{i=\lceil\frac{k}{2}\rceil}^{k+1}s^{\gamma+1}i^{2\gamma}\|\nabla f(y_{i-1})\|^{2}=0.
\]
This completes the proof. 
\end{proof}

\section{Generalization to the composite optimization}
\label{sec: proximal}

In this section, we generalize the convergence rates obtained in~\Cref{thm: conv-rate-nag} to the composite optimization by use of the improved fundamental proximal gradient inequality~\eqref{eqn: prox-grad} instead of the gradient Lipschitz inequality~\eqref{eqn: grad-lip}.

Putting the iterative sequence $x_{k+1}$ and $x_{k}$ into the improved fundamental proximal gradient inequality~\eqref{eqn: prox-grad}, we have
\begin{equation}
\label{eqn: prox-grad-1}
\Phi(x_{k+1}) - \Phi(x_k) \leq \left\langle G_s(y_k), y_k - x_k \right\rangle - \left(s - \frac{Ls^2}{2} \right) \|G_s(y_k)\|^2.
\end{equation}
Consequently, we use $\Phi(x_{k}) - \Phi(x^\star)$ to take place of $f(y_{k-1}) - \Phi(x^\star)$ in the Lyapunov function~\eqref{eqn: lyapunov-nag} as
\begin{align}
\mathcal{E}(k)  = & s^{\gamma}k^{\gamma}\left[k^{\gamma} + 2\gamma(k+1)^{\gamma-1}\right] \left( \Phi(x_{k}) - \Phi(x^\star) \right)  \nonumber \\
                  & + \frac12 \big\|(k-1)^{\gamma}s^{\gamma/2}v_{k} + 2\gamma k^{\gamma-1}s^{(\gamma-1)/2}(x_{k} - x^\star)\big\|^2 \nonumber \\
                  & + 2 \gamma s^{\gamma-1} \left\{ \frac{(k-2)k^{\gamma-1}[(k-1)^{\gamma} + 2\gamma k^{\gamma-1}]}{k + 3\gamma -2} - (k-1)^{\gamma-1}(k-2)^{\gamma} \right\} \|x_{k-1} - x^\star\|^2. \label{eqn: lyapunov-fista}
\end{align}
Here, the unique difference from the calculations based on the gradient Lipschitz inequality shown in the last section is in the third step, the iterative difference of the potential function.  With the inequality above~\eqref{eqn: prox-grad-1}, we estimate the iterative difference as
\begin{align}
    - \frac{s^{\gamma}G_{k+1}}{2} (\Phi(x_{k+1}) - \Phi(x^\star)) + &\frac{s^{\gamma}G_{k}}{2} (\Phi(x_{k}) - \Phi(x^\star))\nonumber \\
\leq  & - \underbrace{\frac{s^{\gamma}G_{k}}{2} \left[ \left\langle G_s(y_{k}), y_{k} - x_{k} \right\rangle - \left(s - \frac{Ls^2}{2} \right) \|G_s(y_k)\|^2 \right]}_{\textbf{III}_1} \nonumber  \\
 &- \underbrace{\frac{s^{\gamma}(G_{k+1} - G_{k})}{2} \left(\Phi(x_{k+1}) - \Phi(x^\star)\right)}_{\textbf{III}_2}. \label{eqn: fista-2-pot}
\end{align}
Furthermore, putting the iterative sequence $x_{k+1}$ and $x^\star$ into the improved fundamental proximal gradient inequality~\eqref{eqn: prox-grad}, we have
\begin{equation}
\label{eqn: prox-grad-2}
\Phi(x_{k+1}) - \Phi(x^\star) \leq \left\langle G_s(y_k), y_k - x^\star \right\rangle - \frac{s}{2}\|G_s(y_k)\|^2,
\end{equation}
where the inequality is simplified due to the step size satisfying $0 < s \leq L/2$. With~\eqref{eqn: fista-2-pot} and~\eqref{eqn: prox-grad-2}, we obtain the iterative difference of the Lyapunov function~\eqref{eqn: lyapunov-fista} as
\[
\mathcal{E}(k+1) - \mathcal{E}(k) \leq - \frac{s^{\gamma+1}(1-sL)k^{\gamma} \left( k^{\gamma} + 2\gamma(k+1)^{\gamma-1} \right)}{2} \|G_s(y_{k})\|^2.
\]
for any $k \geq K_0(\gamma)$. Hence, we conclude this section by characterizing the convergence rates for the composite optimization with the following theorem rigorously. 
\begin{theorem}
\label{thm: conv-rate-fista}
Let $f \in \mathcal{F}_{L}^{1}(\mathbb{R}^d)$ and $g \in \mathcal{F}^0(\mathbb{R}^d)$. There exists $K_{0}:=K_0(\gamma)$ only depending on $\gamma \in(-1,2)$ such that the iterative sequence $\{y_{k}\}_{k=0}^{\infty}$ generated by FISTA obeys
\begin{equation}
\label{eqn: fista-gradnorm}
    \lim_{k\to\infty}\left[s^{\gamma+1}k^{2\gamma+1}\min_{0\leq i\leq k}\|G_s(y_{i})\|^{2}\right] = 0
\end{equation}
for any $0 < s < 1/L$. When the iteration number satisties $k \geq K_0$,  the objective value converges with the following rate as
\begin{equation}
\label{eqn: fista-objvalue}
\Phi(x_{k}) - \Phi(x^\star)\leq\frac{\mathcal{E}(K_{0})}{s^{\gamma}k^{2\gamma}}
\end{equation}
for any $0<s \leq 1/L$. 
\end{theorem}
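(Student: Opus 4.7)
The plan is to mirror the proof of~\Cref{thm: conv-rate-nag}, since almost all of the structural work has already been done: the Lyapunov function~\eqref{eqn: lyapunov-fista} has been constructed, and the key one-step dissipation inequality
\[
\mathcal{E}(k+1) - \mathcal{E}(k) \leq - \frac{s^{\gamma+1}(1-sL) k^{\gamma}\left(k^{\gamma} + 2\gamma(k+1)^{\gamma-1}\right)}{2}\|G_s(y_k)\|^2
\]
has been established for every $k \geq K_0(\gamma)$. I would therefore not re-derive it, but exploit it in two separate ways depending on which conclusion is being proved.

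For the objective-value bound~\eqref{eqn: fista-objvalue}, I would first note that whenever $0<s\leq 1/L$ the right-hand side of the dissipation inequality is non-positive, so $\mathcal{E}(k)$ is non-increasing for $k \geq K_0$ and in particular $\mathcal{E}(k)\leq \mathcal{E}(K_0)$. Next, from the definition of $\mathcal{E}(k)$ in~\eqref{eqn: lyapunov-fista} the remaining two terms (the squared mixed term and the scaled distance term, the latter non-negative for $k\geq K_1(\gamma)$ by the analysis in step~(II) of~\Cref{sec: discrete-case}) are non-negative, so
\[
s^{\gamma}k^{\gamma}\bigl[k^{\gamma} + 2\gamma(k+1)^{\gamma-1}\bigr]\bigl(\Phi(x_k)-\Phi(x^\star)\bigr)\leq \mathcal{E}(K_0).
\]
Since $k^{\gamma}+2\gamma(k+1)^{\gamma-1}\geq k^{\gamma}$, dividing yields $\Phi(x_k)-\Phi(x^\star)\leq \mathcal{E}(K_0)/(s^{\gamma}k^{2\gamma})$, which is exactly~\eqref{eqn: fista-objvalue}.

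For the gradient-mapping bound~\eqref{eqn: fista-gradnorm}, I would restrict to $0<s<1/L$ so that the factor $1-sL$ is strictly positive, then telescope the dissipation inequality from $K_0$ to $\infty$. Using $\mathcal{E}(k)\geq 0$, the telescoping gives
\[
\sum_{k=K_0}^{\infty} s^{\gamma+1} k^{\gamma}\bigl(k^{\gamma} + 2\gamma(k+1)^{\gamma-1}\bigr)\|G_s(y_k)\|^2 \;\leq\; \frac{2\,\mathcal{E}(K_0)}{1-sL} < \infty.
\]
In particular the tail $\sum_{i=\lceil k/2\rceil}^{k+1} s^{\gamma+1} i^{2\gamma}\|G_s(y_i)\|^2$ tends to $0$ as $k\to\infty$, and bounding this tail below by $\min_{0\leq i\leq k}\|G_s(y_i)\|^2$ times a sum of order $k^{2\gamma+1}$, exactly as in the last displayed chain in the proof of~\Cref{thm: conv-rate-nag}, produces~\eqref{eqn: fista-gradnorm}.

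The only real subtlety, and what I expect to be the main (though mild) obstacle, is keeping track of when $k$ is large enough for all implicit assumptions to be in force: the coefficient in the distance-squared term of $\mathcal{E}(k)$ is only guaranteed non-negative for $k\geq K_1(\gamma)$ (step~(II)), while the coefficient $A_k$ is only guaranteed non-positive for $k\geq K_2(\gamma)$ (from the expansion used in~\Cref{thm: conv-rate-nag}); taking $K_0(\gamma):=\max\{K_1(\gamma),K_2(\gamma)\}$ as already done handles both. A secondary point is the apparent mismatch $0<s\leq L/2$ that appears in the derivation of~\eqref{eqn: prox-grad-2}, which should read $0<s\leq 1/L$ so that the coefficient $s-Ls^2/2\geq s/2$; once that is recognized the step-size regimes $s\leq 1/L$ in~\eqref{eqn: fista-objvalue} and $s<1/L$ in~\eqref{eqn: fista-gradnorm} are exactly what is needed for the two arguments above, and the special case $s=1/L$ stated at the end follows by plugging in.
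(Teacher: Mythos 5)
Your proposal is correct and follows essentially the same route as the paper: the paper's proof of~\Cref{thm: conv-rate-fista} is precisely the derivation preceding the theorem (the Lyapunov function~\eqref{eqn: lyapunov-fista} together with the one-step dissipation inequality obtained from~\eqref{eqn: fista-2-pot} and~\eqref{eqn: prox-grad-2}), with the final monotonicity and telescoping-plus-tail arguments carried over verbatim from the proof of~\Cref{thm: conv-rate-nag}, exactly as you do. Your observation that the stated condition ``$0<s\leq L/2$'' near~\eqref{eqn: prox-grad-2} should read $0<s\leq 1/L$ is also right (it is a typo, not a gap); the only stray remark is your reference to an $s=1/L$ special case, which this theorem does not state.
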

\section{The critical case}
\label{sec: critical-case}

In this section, we discuss the convergence behavior of~\texttt{NAG} with its proximal scheme under the critical case $r=-1$, or $\gamma=0$, under the high-resolution differential equation framework, which has been unraveled via the low-resolution differential equation. 
\paragraph{High-resolution differential equation}

First, we consider the continuous high-resolution differential equation. When the parameter satisfies the critical case, that is, $\gamma =0$, the Lyapunov function~\eqref{eqn: lya-con} is reduced to
\begin{equation}
\label{eqn: con-critical-1}
\mathcal{E}(t) = f(X+\sqrt{s}\dot{X}) - f(x^\star) + \frac{1}{2}\|\dot{X}\|^{2},
\end{equation}
with the time derivative as
\begin{equation}
\label{eqn: con-critical-2}
\frac{d\mathcal{E}}{dt} = \langle\nabla f(X+\sqrt{s}\dot{X}), \dot{X} + \sqrt{s}\ddot{X}\rangle + \langle\dot{X}, \ddot{X}\rangle = - \sqrt{s}\|\nabla f(X+\sqrt{s}\Dot{X})\|^{2} \leq 0.
\end{equation}
With~\eqref{eqn: con-critical-1} and~\eqref{eqn: con-critical-2}, we formalize the convergence behavior with the following theorem. 
\begin{theorem}
\label{thm:con-critical}
Let $f\in \mathcal{F}_1^L(\mathbb{R}^d)$, then the solution $X=X(t)$ to the high-resolution differential equation~\eqref{eqn: high-ode} with the parameter $\gamma = 0$ obeys
\begin{equation}
\label{eqn: con-gradnorm-critical}
    \lim_{t\to\infty} \left( t\|\nabla f(X+\sqrt{s}\dot{X})\|^2\right)=0,
\end{equation}
and the objective value $f(X+\sqrt{s}\dot{X}) - f(x^\star)$ is bounded for all $t\geq 0$. 
\end{theorem}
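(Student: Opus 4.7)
The plan is to leverage the critical-case Lyapunov function $\mathcal{E}(t) = f(X+\sqrt{s}\dot{X}) - f(x^\star) + \tfrac{1}{2}\|\dot{X}\|^2$ constructed in equation~\eqref{eqn: con-critical-1}, together with the identity $\dot{\mathcal{E}}(t) = -\sqrt{s}\|\nabla f(X+\sqrt{s}\dot{X})\|^2$ established in~\eqref{eqn: con-critical-2}. Boundedness of the objective value is the easy half: since $\dot{\mathcal{E}} \leq 0$, monotonicity gives
\[
0 \leq f(X+\sqrt{s}\dot{X}) - f(x^\star) \leq \mathcal{E}(t) \leq \mathcal{E}(0) = f(x_0) - f(x^\star),
\]
for every $t\geq 0$. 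As a byproduct, $\|\dot{X}\|^2 \leq 2\mathcal{E}(0)$ and, by $L$-smoothness, $\|\nabla f(X+\sqrt{s}\dot{X})\|^2 \leq 2L\mathcal{E}(0)$, giving uniform a priori bounds on the solution.

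For the gradient-norm limit, the first step is to integrate $-\dot{\mathcal{E}} = \sqrt{s}\|\nabla f(X+\sqrt{s}\dot{X})\|^2$ over $[0,T]$ to conclude, uniformly in $T$,
\[
\int_0^{T} \|\nabla f(X+\sqrt{s}\dot{X})\|^2 \, du \leq \frac{\mathcal{E}(0)}{\sqrt{s}},
\]
so that $\phi(t) := \|\nabla f(X(t)+\sqrt{s}\dot{X}(t))\|^2$ belongs to $L^1([0,\infty))$. Mirroring the final step in the proof of~\Cref{thm: con}, I would split $[t/2,t]$ and bound
\[
\frac{t}{2}\min_{u\in[t/2,t]}\phi(u) \leq \int_{t/2}^{t} \phi(u)\, du \longrightarrow 0,
\]
which already establishes the running-minimum version of~\eqref{eqn: con-gradnorm-critical}.

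The main obstacle is upgrading this running-minimum statement to the genuine pointwise limit asserted in the theorem. The natural route is to show that $\phi$ itself is uniformly continuous: using the identity $\ddot{X} = -\nabla f(X+\sqrt{s}\dot{X})$ from~\eqref{eqn: hig-ode-critical}, one has $\phi = \|\ddot{X}\|^2$ and
\[
\dot\phi = -2\langle \ddot{X},\, \nabla^2 f(X+\sqrt{s}\dot{X})(\dot{X} + \sqrt{s}\ddot{X})\rangle,
\]
so $L$-smoothness of $\nabla f$ together with the a priori bounds on $\|\dot{X}\|$ and $\|\ddot{X}\|=\|\nabla f\|$ yields $|\dot\phi| \leq C\sqrt{\phi}\,(1+\sqrt{\phi})$ for a constant $C$ depending on $L$, $s$, $\mathcal{E}(0)$. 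Uniform continuity plus integrability then forces $\phi(t)\to 0$. The delicate step, which I expect to be the hardest, is then extracting the sharper decay $t\phi(t)\to 0$: using the refined inequality $|\dot\phi| \leq C\sqrt{\phi}$ in the regime where $\phi$ is small, the function $\sqrt{\phi}$ is Lipschitz in $t$, so any time $\tau$ with $\tau\phi(\tau)\geq\varepsilon$ forces $\phi\geq \varepsilon/(4\tau)$ on a neighborhood of length $\sim\sqrt{\varepsilon/\tau}/C$; controlling the total measure of such ``spikes'' via the integrability of $\phi$ and the ODE structure should then rule out a nondecaying subsequence of $t\phi(t)$, closing the argument.
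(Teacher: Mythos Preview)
Your first two steps are exactly the paper's proof: the Lyapunov function \eqref{eqn: con-critical-1}, its derivative \eqref{eqn: con-critical-2}, integrability of $\phi(t)=\|\nabla f(X+\sqrt{s}\dot X)\|^2$, and the half-interval estimate
\[
\frac{t}{2}\,\min_{u\in[t/2,t]}\phi(u)\le \int_{t/2}^{t}\phi(u)\,du\to 0.
\]
That is all the paper does. If you look at the proof of \Cref{thm: con} and at the discrete counterparts (Theorems~\ref{thm: conv-rate-nag} and~\ref{thm: conv-rate-nag-critical}, and the column header in Table~\ref{tab: continuous-case}), the paper consistently proves the running-infimum version even though the continuous theorem statements are written pointwise; \eqref{eqn: con-gradnorm-critical} should be read the same way. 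So for the purposes of matching the paper, you are done after the $[t/2,t]$ step, and your proposal is correct and essentially identical to the paper's argument.

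Your attempt to upgrade to a genuine pointwise limit goes beyond the paper. The Barbalat-type step ($\phi\in L^1$ plus uniform continuity $\Rightarrow \phi(t)\to 0$) is fine, but the final ``spike'' argument for $t\phi(t)\to 0$ does not close as written. With $\sqrt{\phi}$ Lipschitz, a putative bad time $\tau$ with $\tau\phi(\tau)\ge\varepsilon$ forces $\phi\gtrsim \varepsilon/\tau$ only on an interval of length $\sim C^{-1}\sqrt{\varepsilon/\tau}$, so the integral contributed by that spike is of order $(\varepsilon/\tau)^{3/2}$. A sparse sequence (e.g.\ $\tau_n=n^2$) makes these contributions summable, so integrability of $\phi$ alone does not rule it out. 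You would need additional monotonicity or a sharper differential inequality to finish that route; the paper does not pursue this.
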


\paragraph{Nesterov's acceleration}

With the new iterative sequence $v_{k} = (x_{k} - x_{k-1})/\sqrt{s}$, we write down the phase-space representation of~\texttt{NAG} with the parameter $\gamma = 0$ as
\begin{equation}
\label{eqn: phase-critical}
\left\{
\begin{aligned}
& x_{k+1} - x_{k} = \sqrt{s}v_{k+1}, \\
&v_{k+1} - v_{k} = -\sqrt{s}\nabla f(y_{k}),
\end{aligned}
\right.
\end{equation}
where the iterative sequence $y_k$ satisfies the following relation as
\begin{equation}
\label{eqn: yxv-critical}
y_{k} = x_{k} + \sqrt{s}v_{k}.
\end{equation}
When the parameter satisfies the critical case, that is, $\gamma =0$, the Lyapunov function~\eqref{eqn: lyapunov-nag} is reduced to
\begin{equation}
\label{eqn: lyapunov-nag-critical}
\mathcal{E}(k) = f(y_{k-1}) - f(x^\star) + \frac{1}{2}\|v_{k}\|^{2}.
\end{equation}
With the phase-space representation~\eqref{eqn: phase-critical} and the relation among the variables~\eqref{eqn: yxv-critical}, we calculate the iterative difference as
\begin{align}
\mathcal{E}(k+1) - \mathcal{E}(k) = & f(y_{k}) - f(y_{k-1}) + \langle v_{k}, v_{k+1} - v_{k}\rangle + \frac{1}{2}\|v_{k+1} - v_{k}\|^{2} \nonumber \\
= &  f(y_{k}) - f(y_{k-1}) -\sqrt{s}\langle v_{k}, \nabla f(y_{k})\rangle + \frac{s}{2}\|\nabla f(y_{k})\|^{2} \nonumber \\
= &  f(y_{k}) - f(y_{k-1}) + \langle x_{k} - y_{k}, \nabla f(y_{k})\rangle + \frac{s}{2}\|\nabla f(y_{k})\|^{2}. \label{eqn: iter-nag-1}
\end{align}
With the inequality~\eqref{eqn: grad-lip-1}, we further estimate the iterative difference~\eqref{eqn: iter-nag-1} as
\begin{align}
\mathcal{E}(k+1) - \mathcal{E}(k) \leq & \langle\nabla f(y_{k}), x_{k} - y_{k-1}\rangle - \frac{1}{2L}\|\nabla f(y_{k}) - \nabla f(y_{k-1})\|^{2}  + \frac{s}{2}\|\nabla f(y_{k})\|^{2} \nonumber \\
= & \langle\nabla f(y_{k}), \nabla f(y_{k-1})\rangle - \frac{1}{2L}\|\nabla f(y_{k}) - \nabla f(y_{k-1})\|^{2}  + \frac{s}{2}\|\nabla f(y_{k})\|^{2} \nonumber \\
\leq & -\frac{s}{2}\|\nabla f(y_{k-1})\|^2, \label{eqn: iter-nag-2}
\end{align}
where the second equality follows the gradient step of~\texttt{NAG} and the last inequality follows the step size $0 < s \leq 1/L$. Hence, we show the following theorem to characterize the convergence behavior of~\texttt{NAG} with the parameter $\gamma=0$.  
\begin{theorem}
\label{thm: conv-rate-nag-critical}
Let $f \in \mathcal{F}_{L}^{1}(\mathbb{R}^d)$. The iterative sequence $\{y_{k}\}_{k=0}^{\infty}$ generated by \texttt{NAG} with the parameter $\gamma=0$  obeys
\begin{equation}
\label{eqn: nag-gradnorm-critical}
    \lim_{k\to\infty}\left(sk\min_{0\leq i\leq k}\|\nabla f(y_{i})\|^{2}\right) = 0,
\end{equation}
for any $0 < s \leq 1/L$ and the objective value $f(y_k) - f(x^\star)$ is bouned for all $k\geq 0$. Especially, let the step size be set as $s=1/L$, then we have
\[
\lim_{k\to\infty}\left(\frac{k}{L}\min_{0\leq i\leq k}\|\nabla f(y_{i})\|^{2}\right) = 0.
\]
\end{theorem}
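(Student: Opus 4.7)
The proof plan follows the template established by the preceding derivations in this section, which have already prepared almost every ingredient we need. I will use the Lyapunov function
\[
\mathcal{E}(k) = f(y_{k-1}) - f(x^\star) + \tfrac{1}{2}\|v_{k}\|^{2}
\]
introduced in \eqref{eqn: lyapunov-nag-critical}, together with the one-step decrease estimate
\[
\mathcal{E}(k+1) - \mathcal{E}(k) \leq -\tfrac{s}{2}\|\nabla f(y_{k-1})\|^{2}
\]
that was derived in \eqref{eqn: iter-nag-2}. With these in hand, the theorem is essentially a harvesting step.

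First I would observe that $\mathcal{E}(k) \geq 0$ since $f(y_{k-1}) \geq f(x^\star)$ (convexity and minimality of $x^\star$), so that monotonicity immediately gives $\mathcal{E}(k) \leq \mathcal{E}(0)$ for all $k \geq 0$. In particular, $f(y_{k-1}) - f(x^\star) \leq \mathcal{E}(0)$, which yields the boundedness statement. Since $v_0 = 0$ and $y_0 = x_0$, the initial value $\mathcal{E}(0) = f(x_0) - f(x^\star)$ is finite, so the bound is quantitative.

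Next I would telescope the one-step inequality: summing from $k=1$ to $\infty$ gives
\[
\sum_{k=1}^{\infty} \tfrac{s}{2}\|\nabla f(y_{k-1})\|^{2} \leq \mathcal{E}(1) \leq \mathcal{E}(0) < \infty.
\]
Because the tail of a convergent series tends to zero, $\lim_{k\to\infty}\sum_{i=\lceil k/2\rceil}^{k}\|\nabla f(y_{i-1})\|^{2} = 0$. The standard minimum-over-range trick then yields
\[
0 \leq \tfrac{s k}{4}\min_{0\leq i\leq k}\|\nabla f(y_{i})\|^{2} \leq \sum_{i=\lceil k/2\rceil}^{k+1} \tfrac{s}{2}\|\nabla f(y_{i-1})\|^{2} \longrightarrow 0,
\]
which gives \eqref{eqn: nag-gradnorm-critical}. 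The special case $s = 1/L$ is then a direct substitution.

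I do not expect any genuine obstacle here, because the critical case $\gamma = 0$ drastically simplifies the Lyapunov function: the mixed term collapses to $\tfrac{1}{2}\|v_k\|^2$, the distance-square term disappears, and the coefficient in front of the objective value becomes constant. Consequently, one does not need to fight with the fractional powers $k^{\gamma}$ or with the threshold index $K_0(\gamma)$ as in \Cref{thm: conv-rate-nag}; the monotonicity holds starting from $k=0$. The only mild subtlety worth stating explicitly is that the analysis gives a $\min$-type (rather than "last-iterate") rate for $\|\nabla f(y_i)\|^2$, and that the bound $\mathcal{E}(0) = f(x_0) - f(x^\star)$ serves simultaneously as the uniform upper bound on the objective-value gap and on the gradient-norm sum.
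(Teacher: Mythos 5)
Your proposal is correct and follows essentially the same route as the paper: it uses the Lyapunov function \eqref{eqn: lyapunov-nag-critical} with the decrease estimate \eqref{eqn: iter-nag-2}, then harvests boundedness of $f(y_k)-f(x^\star)$ by monotonicity and the rate \eqref{eqn: nag-gradnorm-critical} by telescoping plus the minimum-over-the-range trick, exactly as in the paper's treatment (which mirrors the proof of \Cref{thm: conv-rate-nag}). The only cosmetic point is indexing: since $\mathcal{E}(k)$ involves $y_{k-1}$, the natural starting value is $\mathcal{E}(1)=f(y_0)-f(x^\star)+\tfrac{s}{2}\|\nabla f(y_0)\|^2$ (finite), and the telescoping should be anchored there rather than at an $\mathcal{E}(0)$ involving $y_{-1}$; this changes nothing in the argument.
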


\paragraph{Generalization to the composite optimization}

Similarly, with the new sequence $v_{k} = (x_{k} - x_{k-1})/\sqrt{s}$, we write down the phase-space representation of FISTA with the parameter $\gamma = 0$ as  
\begin{equation}
\label{eqn: phase-critical-fista}
\left\{
\begin{aligned}
& x_{k+1} - x_{k} = \sqrt{s}v_{k+1}, \\
&v_{k+1} - v_{k} = -\sqrt{s}\nabla f(y_{k}),
\end{aligned}
\right.
\end{equation}
where the iterative sequence $y_k$ also satisfies the following relation~\eqref{eqn: yxv-critical}. 
When the parameter satisfies the critical case, that is, $\gamma =0$, the Lyapunov function~\eqref{eqn: lyapunov-fista} is reduced to
\begin{equation}
\label{eqn: lyapunov-fista-critical}
\mathcal{E}(k) = \Phi(x_{k}) - \Phi(x^\star) + \frac{1}{2}\|v_{k}\|^{2},
\end{equation}
with the iterative difference between $\mathcal{E}(k+1)$ and $\mathcal{E}(k)$ as
\begin{equation}
\label{eqn: iter-fista-1}
\mathcal{E}(k+1) - \mathcal{E}(k) =   \Phi(x_{k+1}) - \Phi(x_{k})+ \langle x_{k} - y_{k}, G_s(y_{k})\rangle + \frac{s}{2}\|G_s(y_{k})\|^{2},
\end{equation}
which follows the phase-space representation~\eqref{eqn: phase-critical-fista} and the relation among the variables~\eqref{eqn: yxv-critical}. With the inequality~\eqref{eqn: prox-grad-1}, we further estimate the iterative difference~\eqref{eqn: iter-fista-1} as
\begin{equation}
\label{eqn: iter-nag-2}
\mathcal{E}(k+1) - \mathcal{E}(k) \leq -  \frac{s(1-sL)}{2}\|G_{s}(y_{k})\|^{2}.
\end{equation}
Hence, we show the following theorem to characterize the convergence behavior of FISTA  with the parameter $\gamma=0$.  
\begin{theorem}
\label{thm: conv-rate-fista-critical}
Let $f \in \mathcal{F}_{L}^{1}(\mathbb{R}^d)$ and $g \in \mathcal{F}^0(\mathbb{R}^d)$. The iterative sequence $\{y_{k}\}_{k=0}^{\infty}$ generated by FISTA with $\gamma=0$ obeys
\begin{equation}
\label{eqn: fista-gradnorm-critical}
    \lim_{k\to\infty}\left[sk\min_{0\leq i\leq k}\|G_s(y_{i})\|^{2}\right] = 0
\end{equation}
for any $0 < s < 1/L$ and the objective value $\Phi(x_k) - \Phi(x^\star)$ is bouned for all $k\geq 0$.
\end{theorem}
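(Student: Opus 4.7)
The plan is to read off the result from the Lyapunov inequality already derived just above the theorem statement. Specifically, the estimate $\mathcal{E}(k+1) - \mathcal{E}(k) \leq -\tfrac{s(1-sL)}{2}\|G_s(y_k)\|^2$ for the reduced Lyapunov function $\mathcal{E}(k) = \Phi(x_k) - \Phi(x^\star) + \tfrac12 \|v_k\|^2$ does essentially all the work, once we split into two cases.

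First I would argue boundedness of the objective gap. Since $0 < s \leq 1/L$ guarantees $1-sL \geq 0$, the iterative difference is non-positive, so $\{\mathcal{E}(k)\}$ is monotonically non-increasing. Because $\Phi(x_k) - \Phi(x^\star) \geq 0$ (by convexity and optimality of $x^\star$) and $\tfrac12 \|v_k\|^2 \geq 0$, we obtain $\Phi(x_k) - \Phi(x^\star) \leq \mathcal{E}(k) \leq \mathcal{E}(0)$ for all $k \geq 0$. This delivers the boundedness claim for the full range $0 < s \leq 1/L$.

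Next I would handle the minimal proximal subgradient norm. For the strictly smaller stepsize $0 < s < 1/L$, we have $1-sL > 0$, so telescoping the Lyapunov inequality from $0$ to $k$ yields
\[
\frac{s(1-sL)}{2}\sum_{i=0}^{k}\|G_s(y_i)\|^2 \leq \mathcal{E}(0) - \mathcal{E}(k+1) \leq \mathcal{E}(0).
\]
Thus the series $\sum_{i=0}^{\infty}\|G_s(y_i)\|^2$ converges, and in particular its tail $\sum_{i=\lceil k/2\rceil}^{k}\|G_s(y_i)\|^2$ tends to $0$ as $k\to\infty$. Bounding the minimum by the average over the tail,
\[
\frac{k}{2}\cdot \min_{0\leq i\leq k}\|G_s(y_i)\|^2 \leq \sum_{i=\lceil k/2\rceil}^{k}\|G_s(y_i)\|^2,
\]
and multiplying by $s$ gives $s k \min_{0\leq i\leq k}\|G_s(y_i)\|^2 \to 0$, which is \eqref{eqn: fista-gradnorm-critical}.

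Since the heavy lifting (constructing $\mathcal{E}$, invoking the improved proximal gradient inequality \eqref{eqn: prox-grad-1} and \eqref{eqn: prox-grad-2}, and deriving the Lyapunov decrement) has already been carried out in the paragraphs preceding the theorem, no real obstacle remains; the only subtlety worth flagging is the role of the stepsize restriction, namely that the $o(1/(sk))$ minimal-gradient decay genuinely needs $s < 1/L$ strictly (so that $1-sL>0$), whereas the boundedness of the objective gap survives at the boundary $s = 1/L$.
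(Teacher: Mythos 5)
Your proposal is correct and follows essentially the same route the paper intends: the Lyapunov decrement $\mathcal{E}(k+1)-\mathcal{E}(k)\leq -\tfrac{s(1-sL)}{2}\|G_s(y_k)\|^2$ derived just before the theorem gives monotonicity (hence boundedness of $\Phi(x_k)-\Phi(x^\star)$) and, after telescoping, summability of $\|G_s(y_i)\|^2$, from which the tail-versus-minimum comparison yields $sk\min_{0\leq i\leq k}\|G_s(y_i)\|^2\to 0$, exactly as in the paper's argument for \texttt{NAG} in \Cref{thm: conv-rate-nag}. Your remark that $s<1/L$ is needed strictly for the gradient-norm decay while boundedness survives at $s=1/L$ is a correct and worthwhile clarification of the stepsize roles.
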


\section{Conclusion and discussion}
\label{sec: conclusion}

In this paper, we expand the high-resolution differential equation framework for~\texttt{NAG} and its proximal correspondence --- FISTA to the underdamped case. Motivated by the power of the time $t^{\gamma}$ or the iteration $k^{\gamma}$, the new Lyapunov functions are constructed to obtain the convergence rates, which do not only reproduce the previous rate of the objective value based on the low-resolution differential equation framework but also characterize the convergence rate of the gradient norm square. All the convergence rates obtained for the underdamped case are dependent on the parameter $\gamma \in (0,1)$. When the parameter is reduced to $\gamma=1$, the convergence rates of both the objective value and the gradient norm square are identical to that previously obtained in~\citep{chen2022gradient}. In the numerical experiments, we also observe that~\texttt{NAG} converges for the critical case ($\gamma=0$), where the iterative behavior can be simulated by the high-resolution differential equation but is inconsistent with the low-resolution differential equation that degenerates to the conservative Newton's equation. Finally, we use the high-resolution differential equation framework to theoretically characterize the convergence rates, which are consistent with that obtained for the underdamped case with $\gamma=0$.  

In~\Cref{thm:con-critical}, we show that the Lyapunov function~\eqref{eqn: con-critical-1} along the solution to the high-resolution differential equation~\eqref{eqn: hig-ode-critical} decreases monotonously. Meanwhile, we find that the Lyapunov function for the critical case~\eqref{eqn: con-critical-1} is similar to the energy. Image from the view of physics, when a spring oscillates with friction, the energy is always dissipated to zero. Furthermore, it is shown in~\eqref{eqn: con-critical-2} that the time derivative of the Lyapunov function is dominated by the gradient norm square, which manifests that the Lyapunov function along the solution to the high-resolution differential equation must stop at the points where the gradient equals zero. The same phenomenon is also shown in~\Cref{thm: con} for the underdamped case and~\citep[Theorem 4.1]{chen2022gradient} for the critically damped case, but it does not appear along the solution to the low-resolution differential equation in~\citep[(18)]{su2016differential}. The description above manifests that the Lyapunov functions constructed probably converge to zero. However, it still needs to prove further at the stop points that the velocity equals zero or the position equals the unique minimizer $x^\star$.  If the Lyapunov functions constructed for any $\gamma \in [0,1]$ converge to zero, then the objective value should converge faster. Hence, we mention the following question as an open problem.
\begin{tcolorbox}
Does the~\texttt{NAG} or FISTA converge with the following rate
\[
f(y_{k}) - f(x^\star) \leq o \left(\left(\frac{1}{sk^2}\right)^{\frac{r+1}{3}}\right) \quad \text{and} \quad f(x_{k}) - f(x^\star) \leq o \left(\left(\frac{1}{sk^2}\right)^{\frac{r+1}{3}}\right)
\]
for any $r \in [-1,2]$?
\end{tcolorbox}

%
%


{\small
\subsection*{Acknowledgments}
We would like to thank Bowen Li for his helpful discussions.
\bibliographystyle{abbrvnat}
\bibliography{ref}
}
\end{document}